\newtheorem{thm}{Theorem}[section]
\newtheorem{coro}[thm]{Corollary}
\theoremstyle{definition}
\newcommand{\N}{\mathbb N}
\newcommand{\M}{\mathbb M}
\newcommand{\R}{\mathbb R}
\newcommand{\dist}{\operatorname{dist}}
\newcommand{\rank}{\operatorname{rank}}
\numberwithin{equation}{section}
\begin{document}



       \author{Seonghak Kim}
       \address{Institute for Mathematical Sciences\\ Renmin University of China \\  Beijing 100872, PRC}
       \email{kimseo14@ruc.edu.cn}


       \author{Youngwoo Koh}

       \address{School of Mathematics\\ Korea Institute for Advanced Study \\ Seoul 130-722, ROK}


       \email{ywkoh@kias.re.kr}


       \title[1-D non-convex elastodynamics]{Weak solutions for one-dimensional  non-convex elastodynamics}

\subjclass[2010]{74B20,74N15,35M13}
\keywords{non-convex elastodynamics, hyperbolic-elliptic equations, phase transition, partial differential inclusions, Baire's category method, microstructures of weak solutions}

\begin{abstract}
We explore local existence and properties of classical weak solutions to the initial-boundary value problem of a one-dimensional quasilinear equation of elastodynamics with non-convex stored-energy function, a model of phase transitions in elastic bars  proposed by Ericksen \cite{Er}. The instantaneous formation of microstructures of local weak solutions is observed for all smooth initial data with  initial strain having its range overlapping with the phase transition zone of the Piola-Kirchhoff stress. As byproducts, it is shown that such a problem admits a local weak solution for all smooth initial data and local weak solutions that are smooth for a short period of time and exhibit microstructures thereafter for some smooth initial data. In a parallel exposition, we also include some results concerning one-dimensional quasilinear  hyperbolic-elliptic equations.
\end{abstract}
\maketitle

\section{Introduction}
The evolution process of a one-dimensional continuous medium with elastic response can be modeled by quasilinear wave equations of the form
\begin{equation}\label{main-P}
u_{tt} =\sigma(u_x)_x, 
\end{equation}
where $u=u(x,t)$ denotes the displacement of a reference point  $x$ at time $t$ and $\sigma=\sigma(s)$  the Piola-Kirchhoff stress, which is the derivative of a stored-energy function $W=W(s)\ge 0$. With $v=u_x$ and $w=u_t$, one may study equation (\ref{main-P}) as the system of conservation laws:
\begin{equation}\label{main-cons}
\left\{\begin{split}
v_t & = w_x, \\
w_t & = \sigma(v)_x.
\end{split}\right.
\end{equation}

In case of a strictly convex stored-energy function, the existence of weak or classical solutions to  equation (\ref{main-P}) and  its vectorial case has been studied extensively: Global weak solutions to system (\ref{main-cons}) and hence  equation (\ref{main-P}) are established in a classical work of {DiPerna} \cite{Di} via vanishing viscosity method  in the framework of compensated compactness of {Tartar} \cite{Ta} for $L^\infty$ data and later by {Lin} \cite{Li} and {Shearer} \cite{Sr} in an $L^p$ setup. This framework is also used to construct global weak solutions to (\ref{main-P}) via relaxation methods by {Serre} \cite{Se} and {Tzavaras} \cite{Tz}. An alternative variational scheme is studied by {Demoulini \emph{et al.}} \cite{DST} via time discretization. However global existence of weak solutions to the vectorial case of (\ref{main-P}) is still open. In regard to classical solutions to (\ref{main-P}) and its vectorial case, one can refer to {Dafermos and Hrusa} \cite{DH} for local existence of smooth solutions, to {Klainerman and Sideris} \cite{KS} for global existence of smooth solutions  for small initial data in dimension 3, and to {Dafermos} \cite{Ds1} for uniqueness of smooth solutions in the class of BV weak solutions whose shock intensity is not too strong.

Convexity assumption on the stored-energy function is often regarded as  a severe restriction in view of the actual behavior of elastic materials (see, e.g., \cite[Section 2]{Hi} and \cite[Section 8]{CN}). However there have not been  many analytic works dealing with the lack of convexity on the energy function. For the vectorial case of equation (\ref{main-P}) in dimension 3, measure-valued solutions are constructed for  polyconvex energy functions by {Demoulini \emph{et al.}} \cite{DST1}. Also by the same authors \cite{DST2}, in the identical situation, it is shown  that a dissipative measure-valued solution coincides with a strong one provided the latter exists. Assuming convexity on the energy function at infinity but not allowing polyconvexity, measure-valued solutions are obtained by {Rieger} \cite{Ri} for the vectorial case of (\ref{main-P}) in any dimension. Despite of all these existence results,   there has been no example of a non-convex energy function with which (\ref{main-P}) admits \emph{classical} weak solutions  in general, not to mention its vectorial case. Among some optimistic and pessimistic opinions, {Rieger} \cite{Ri} expects such solutions even showing microstructures of phase transitions. Moreover, such expected phenomenology is successfully implemented  in some numerical simulations \cite{CR, Pr}.

In this paper, we study weak solutions to the one-dimensional initial-boundary value problem of non-convex elastodynamics
\begin{equation}\label{ib-P}
\begin{cases} u_{tt} =\sigma(u_x)_x& \mbox{in $\Omega_T=\Omega\times (0,T)$,} \\
u(0,t)=u(1,t)=0 & \mbox{for $t\in(0,T)$,}\\
u =g,\,u_t=h & \mbox{on $\Omega\times \{t=0\}$},
\end{cases}
\end{equation}
where $\Omega=(0,1)\subset\R$ is the domain occupied by a reference configuration of an elastic bar, $T>0$ is a fixed number,   $g$ is the initial displacement of the bar, $h$ is the initial rate of change of the displacement, and the stress $\sigma:(-1,\infty)\to\R$ is given as in Figure \ref{fig1}. The zero boundary condition here amounts to the physical situation of fixing the end-points of the bar.  In this case, the energy function $W:(-1,\infty)\to[0,\infty)$ may satisfy $W(s)\to\infty$ as $s\to -1^+$; but this is not required to obtain our result.
On the other hand, we consider  (\ref{ib-P}) as a prototype of the hyperbolic-elliptic problem with a non-monotone constitutive function $\sigma:\R\to\R$ as in Figure \ref{fig2}.

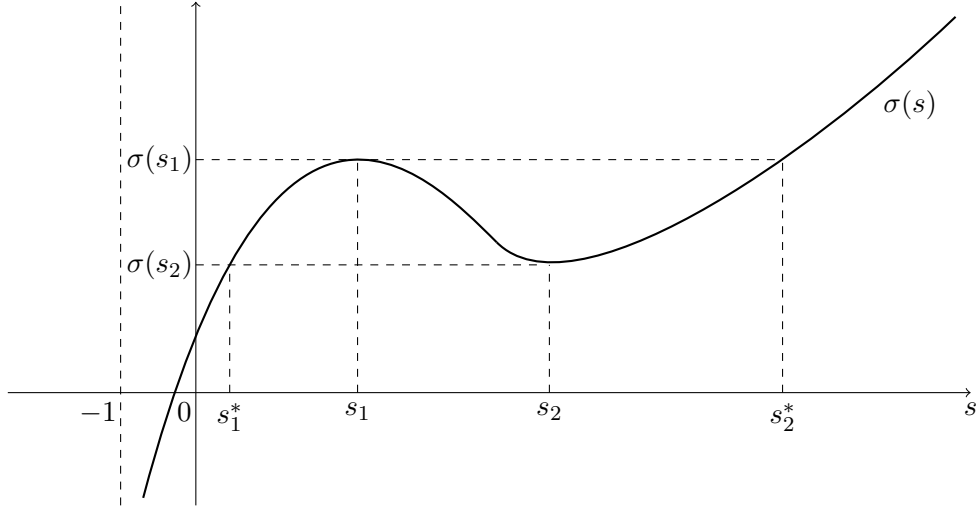
\begin{figure}[ht]
\begin{center}
\begin{tikzpicture}[scale =1]
    \draw[->] (-1.5,0) -- (11.3,0);
	\draw[dashed] (0,-1.5) -- (0,5.2);
    \draw[->] (1,-1.5) -- (1,5.2);
 \draw[dashed] (1,1.7)--(5.6,1.7);
 \draw[dashed] (1,3.1)--(8.8,3.1);
 \draw[dashed] (8.8,0)--(8.8,3.1);
    \draw[dashed] (5.7, 0)  --  (5.7, 1.7) ;
   \draw[dashed] (1.45, 0)  --  (1.45, 1.7) ;
   \draw[dashed] (3.15, 0)  --  (3.15, 3.15) ;
	\draw[thick]   (0.3, -1.4) .. controls (2,5) and  (4, 3)   ..(5,2);
	\draw[thick]   (5, 2) .. controls  (6, 1) and (9,3) ..(11.1, 5 );
	\draw (11.3,0) node[below] {$s$};
    \draw (-0.3,0) node[below] {{$-1$}};
    \draw (10.5, 3.5) node[above] {$\sigma(s)$};
 \draw (1.1, 3.1) node[left] {$\sigma(s_1)$};
 \draw (1.1, 1.7) node[left] {$\sigma(s_2)$};
   \draw (5.7, 0) node[below] {$s_2$};
   \draw (3.15, 0) node[below] {$s_1$};
   \draw (0.85, 0) node[below] {$0$};
   \draw (1.45, 0) node[below] {$s_1^*$};
   \draw (8.8, 0) node[below] {$s_2^*$};
    \end{tikzpicture}
\end{center}
\caption{Non-monotone Piola-Kirchhoff stress  $\sigma(s).$}
\label{fig1}
\end{figure}

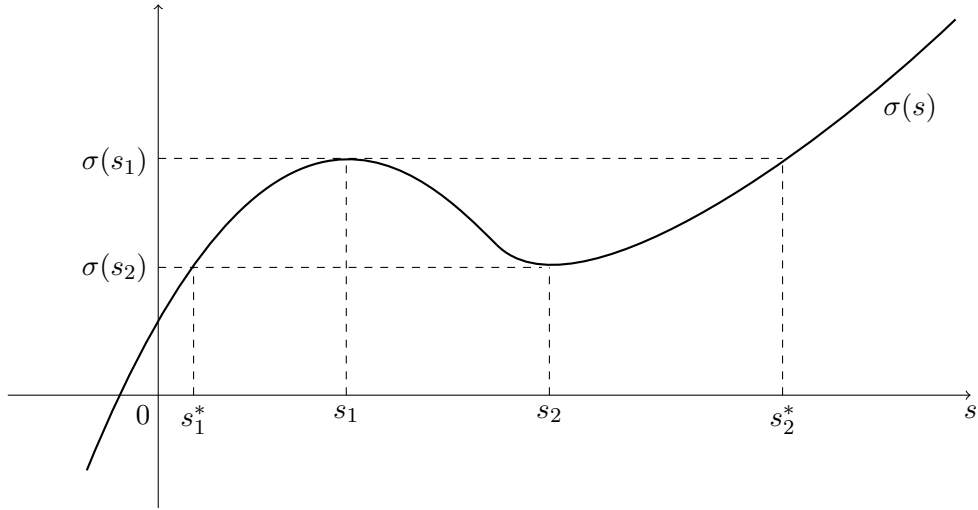
\begin{figure}[ht]
\begin{center}
\begin{tikzpicture}[scale =1]
    \draw[->] (-1.5,0) -- (11.3,0);
	\draw[->] (0.5,-1.5) -- (0.5,5.2);
 \draw[dashed] (0.5,1.7)--(5.6,1.7);
 \draw[dashed] (0.5,3.15)--(8.8,3.15);
 \draw[dashed] (8.8,0)--(8.8,3.1);
    \draw[dashed] (5.7, 0)  --  (5.7, 1.7) ;
   \draw[dashed] (0.97, 0)  --  (0.97, 1.7) ;
   \draw[dashed] (3, 0)  --  (3, 3.1) ;
	\draw[thick]   (-0.45, -1) .. controls (2,5) and  (4, 3)   ..(5,2);
	\draw[thick]   (5, 2) .. controls  (6, 1) and (9,3) ..(11.1, 5 );
	\draw (11.3,0) node[below] {$s$};
    \draw (0.3,0) node[below] {{$0$}};
    \draw (10.5, 3.5) node[above] {$\sigma(s)$};
 \draw (0.5, 3.1) node[left] {$\sigma(s_1)$};
 \draw (0.5, 1.7) node[left] {$\sigma(s_2)$};
   \draw (5.7, 0) node[below] {$s_2$};
   \draw (3, 0) node[below] {$s_1$};
   \draw (0.97, 0) node[below] {$s_1^*$};
   \draw (8.8, 0) node[below] {$s_2^*$};
    \end{tikzpicture}
\end{center}
\caption{Non-monotone constitutive function $\sigma(s).$}
\label{fig2}
\end{figure}

Problem (\ref{ib-P}) with a non-monotone stress $\sigma(s)$ as in Figure \ref{fig1} is proposed by {Ericksen} \cite{Er} as a model of the phenomena of phase transitions in elastic bars. There have been many studies on such a problem that usually fall into two types. One direction of study is to consider the Riemann problem of the system of conservation laws of mixed type (\ref{main-cons})    initiated by James \cite{Ja} and followed by numerous works (see, e.g., {Shearer} \cite{Sh}, {Pego and Serre} \cite{PS} and {Hattori} \cite{Ha}). Another path is to study the viscoelastic version of equation (\ref{main-P}): To name a few among initiative works, {Dafermos} \cite{Ds} considers the equation $u_{tt}=\sigma(u_x,u_{xt})_x+f(x,t)$ under certain parabolicity and growth conditions and establishes global existence and uniqueness of smooth solutions together with some asymptotic behaviors as $t\to\infty$. Following the work of {Andrews} \cite{An}, {Andrews and Ball} \cite{AB} prove global existence of weak solutions to the equation $u_{tt}=u_{xxt}+\sigma(u_x)_x$ for non-smooth initial data and study their long-time behaviors. For the same equation, {Pego} \cite{Pe} characterizes  long-time convergence of weak solutions to several different types of stationary states in a strong sense. Nonetheless, up to our best knowledge, the main theorem below may be the  first general existence result on weak solutions to  (\ref{ib-P}), not in the stream of the Riemann problem nor in that of non-convex viscoelastodynamics.


Let $\sigma(s)$ be given as in Figure \ref{fig1} or \ref{fig2} (see section \ref{sec:state}). For an initial datum $(g,h)\in W^{1,\infty}_0(\Omega)\times L^\infty(\Omega)$, we say that a function $u\in W^{1,\infty}(\Omega_T)$ is  a \emph{weak solution} to problem (\ref{ib-P}) provided $u_x>-1$ a.e. in $\Omega_T$  in case of Figure \ref{fig1}, for all $\varphi\in C^\infty_c(\Omega\times[0,T))$, we have
\begin{equation}\label{def:sol}
\int_{\Omega_T}(u_t\varphi_t-\sigma(u_x)\varphi_x)\,dxdt=-\int_0^1 h(x)\varphi(x,0)\,dx,
\end{equation}
and
\begin{equation}\label{def:sol-1}
\left\{\begin{array}{ll}
         u(0,t)=u(1,t)=0 & \mbox{for $t\in(0,T)$}, \\
         u(x,0)=g(x) & \mbox{for $x\in\Omega$}.
       \end{array}
\right.
\end{equation}

The main result of the paper is the following theorem that will be separated into two detailed statements in section \ref{sec:state} along with some corollaries.

\begin{thm}\label{thm:main-pre}
Let $\sigma(s)$ be as in Figure \ref{fig1} or \ref{fig2}, and let $(g,h)\in W^{3,2}_0(\Omega)\times W^{2,2}_0(\Omega) $ with $s_1^*<g' (x_0)<s_2^*$ at some $x_0\in\Omega$. In case of Figure \ref{fig1}, assume also $g'(x)>-1$ for all $x\in\bar\Omega$.  Then there exists a finite number $T>0$ for which problem (\ref{ib-P}) admits infinitely many weak solutions.
\end{thm}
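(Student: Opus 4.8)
The plan is to recast \eqref{ib-P} as a partial differential inclusion for gradients and to resolve it by the Baire category method. Introduce a potential $z$ with $z_x=u_t$ and $z_t=\sigma(u_x)$, which exists precisely when $u$ is a distributional solution of $u_{tt}=\sigma(u_x)_x$; then a weak solution of \eqref{ib-P} is the same thing as a map $U=(u,z)\in W^{1,\infty}(\Omega_T;\R^2)$ with
\[
\nabla U=\begin{pmatrix} u_x & u_t\\ z_x & z_t\end{pmatrix}\in K:=\left\{\begin{pmatrix} a & b\\ b & \sigma(a)\end{pmatrix}:a\in I,\ b\in\R\right\}\quad\text{a.e. in }\Omega_T,
\]
where $I=(-1,\infty)$ in the setting of Figure \ref{fig1} and $I=\R$ in that of Figure \ref{fig2}, subject to the boundary and initial conditions \eqref{def:sol-1} on $u$ together with a choice of $z$ at $t=0$ determined by $(g,h)$. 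Since $U$ carries no extra differential constraint, the relevant object is the rank-one convex hull $K^{rc}$ of $K$. The first task is therefore to exhibit a bounded open set $\mathcal{U}\subset K^{rc}\setminus K$ with uniform ``wiggle room'' over the transition zone. The key point is that, for $s\in(s_1^\ast,s_2^\ast)$ and any $b$, the symmetric matrix $\bigl(\begin{smallmatrix} s & b\\ b & c\end{smallmatrix}\bigr)$ with $c$ strictly between $\sigma(s)$ and the corresponding Maxwell-type level lies on the rank-one segment joining $\bigl(\begin{smallmatrix} a_- & b\\ b & \sigma(a_-)\end{smallmatrix}\bigr)$ and $\bigl(\begin{smallmatrix} a_+ & b\\ b & \sigma(a_+)\end{smallmatrix}\bigr)$, where $a_-<s<a_+$ are chosen in $I$ with $\sigma(a_-)=\sigma(a_+)$ (so that the difference of the two matrices is $\mathrm{diag}(a_+-a_-,0)$, of rank one); iterating such laminations over a compact subinterval of $(s_1^\ast,s_2^\ast)$ yields the desired $\mathcal{U}$.

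Next I would build a smooth subsolution $U_0$. Choose a smooth, strictly increasing $\tilde\sigma$ coinciding with $\sigma$ outside $[s_1^\ast,s_2^\ast]$ and whose graph over $(s_1^\ast,s_2^\ast)$ runs strictly inside the set $\{(a,c):\bigl(\begin{smallmatrix} a & b\\ b & c\end{smallmatrix}\bigr)\in\inte K^{rc}\}$. By \cite{DH}, the relaxed quasilinear problem $u_{tt}=\tilde\sigma(u_x)_x$ with data $(g,h)$ and zero end conditions has a classical solution $u_0$ on $\Omega_T$ for some finite $T>0$; this is where the regularity $(g,h)\in W^{3,2}_0(\Omega)\times W^{2,2}_0(\Omega)$ and the associated compatibility conditions at the endpoints are used and where $T$ is fixed, and in the Figure \ref{fig1} case the hypothesis $g'>-1$ on $\bar\Omega$, after possibly shrinking $T$, keeps $u_{0,x}>-1$ on $\Omega_T$. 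With $z_0$ the associated potential, $U_0=(u_0,z_0)$ satisfies $\nabla U_0\in\{\bigl(\begin{smallmatrix} a & b\\ b & \tilde\sigma(a)\end{smallmatrix}\bigr)\}$, and $\nabla U_0\in K$ wherever $u_{0,x}\notin(s_1^\ast,s_2^\ast)$. The condition $s_1^\ast<g'(x_0)<s_2^\ast$ and continuity now furnish a nonempty open set $G\subset\Omega_T$, accumulating at $(x_0,0)$, on which $u_{0,x}\in(s_1^\ast,s_2^\ast)$ and hence $\nabla U_0\in\mathcal{U}$; after shrinking $G$ so that $\overline{G}$ avoids $\partial\Omega\times[0,T]$, any perturbation of $U_0$ supported in $G\cap\{t>0\}$ preserves \eqref{def:sol-1}, while microstructure can be forced in every slab $\Omega\times(0,\varepsilon)$.

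Finally, I would run the Baire category scheme. Let $\mathcal{X}$ be the closure, in a metric inducing uniform convergence of $U$ together with weak-$\ast$ convergence of $\nabla U$ on the (bounded) family of competitors, of $\{U_0+V:V\in C^1_c(G\cap\{t>0\};\R^2),\ \nabla(U_0+V)\in\mathcal{U}\text{ on }G,\ V\equiv0\text{ off }G\}$; this is a nonempty complete metric space on which $U\mapsto\int_G\dist^2(\nabla U,K)\,dx\,dt$ is a bounded Baire-one functional, hence continuous on a residual subset of $\mathcal{X}$. The heart of the argument is a perturbation (in-approximation) lemma: any $U\in\mathcal{X}$ that is $C^1$ on $G$ with $\int_G\dist^2(\nabla U,K)>0$ can be replaced, within $\mathcal{X}$ and by an arbitrarily small uniform amount, by a $C^1$ map whose value of that functional is strictly smaller by a fixed fraction — achieved by splicing in highly oscillatory rank-one laminates valued in $\mathcal{U}$, using the hull structure above and localizing so as to respect the symmetry of $\nabla U$, the support restriction, and (for Figure \ref{fig1}) the bound $u_x>-1$. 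It follows that every $U$ in the residual set satisfies $\nabla U\in K$ a.e. in $\Omega_T$, so its first component $u$ is a weak solution of \eqref{ib-P}, and the abundance of independent laminates available in the perturbation lemma makes that set infinite. The main obstacle is exactly this perturbation lemma: it demands the explicit, base-point-uniform description of $K^{rc}$ near the phase-transition zone together with a careful space-time construction of the oscillations that neither violates the constraint $u_x>-1$ nor disturbs the boundary and initial data.
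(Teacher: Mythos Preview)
Your proposal is correct and follows essentially the same route as the paper: the potential reformulation $\nabla(u,z)\in\Sigma_\sigma$, the smooth subsolution built from a strictly monotone $\tilde\sigma$ via Dafermos--Hrusa, the rank-one segment $\mathrm{diag}(a_+-a_-,0)$ between equal-stress states, and the Baire category scheme are exactly the paper's ingredients. The only implementation differences are cosmetic --- the paper fixes concrete levels $r_1<r_2$ and an explicit slab $U$ rather than speaking of $K^{rc}$, uses the gradient operator itself as the Baire-one map rather than a distance functional, and packages the ``respect the symmetry of $\nabla U$'' step into a standalone rank-one approximation lemma under the linear constraint $\mathcal L(\xi)=\xi_{12}-\xi_{21}$ (the paper's Theorem~4.1), which is precisely the technical device that resolves what you correctly flag as the main obstacle.
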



Existence and non-uniqueness of weak solutions to problem (\ref{ib-P}) have been generally accepted (especially, in the context of the Riemann problem) and actively studied in the community of solid mechanics. Such non-uniqueness is usually understood to be arising from a constitutive deficiency in the theory of elastodynamics, reflecting the need to incorporate some additional relations (see, e.g., {Slemrod} \cite{Sl}, {Abeyaratne and Knowles} \cite{AK} and {Truskinovsky and Zanzotto} \cite{TZ}).

Global existence of Lipschitz continuous weak solutions to problem (\ref{ib-P}) is not directly obtained in the course of proving Theorem \ref{thm:main-pre} as it would require a global classical solution to some modified hyperbolic problem in our method of proof and such a global one might not exist due to a possible shock formation at a finite time. However, we still expect the existence of global $W^{1,p}$-solutions $(p<\infty)$ to (\ref{ib-P}).

The rest of the paper is organized as follows. Section \ref{sec:state} describes precise structural assumptions on the functions $\sigma(s)$ corresponding to Figures \ref{fig1} and \ref{fig2}, respectively. Then detailed statements of the main result, Theorem \ref{thm:main-pre}, with respect to Figures \ref{fig1} and \ref{fig2} are introduced separately as Theorems \ref{thm:main-NCE} and \ref{thm:main-HEP} with relevant corollaries in each case.
Section \ref{sec:exist} begins with a motivational approach to solve problem (\ref{ib-P}) as a homogeneous partial differential inclusion with a linear constraint. Then the main results in precise form, Theorems \ref{thm:main-NCE} and \ref{thm:main-HEP}, are proved at the same time under a pivotal density fact, Theorem \ref{thm:density}. The proofs of the corollaries to the main results are also included in section \ref{sec:exist}.
In section \ref{sec:rank-1}, a major tool for proving the density fact is established in a general form.
Lastly, section \ref{sec:density-proof} carries out the proof of the density fact.

In closing this section, we introduce some notations. Let $m,n$ be positive integers. We denote by $\M^{m\times n}$  the space of $m\times n$ real matrices and by $\M_{sym}^{n\times n}$ that of symmetric $n\times n$ real matrices. We use $O(n)$ to denote the space of $n\times n$ orthogonal matrices. For a given matrix $M\in\M^{m\times n}$, we write $M_{ij}$ for  the component of $M$ in the $i$th row and $j$th column and $M^T$ for the transpose of $M$. For a bounded domain $U\subset\R^n$ and a function $w^*\in W^{m,p}(U)$ $(1\le p\le\infty)$, we use  $W_{w^*}^{m,p}(U)$ to denote the space of functions $w\in W^{m,p}(U)$ with boundary trace $w^*.$


\section{Precise statements of main theorems}\label{sec:state}

In this section, we present structural assumptions on the functions $\sigma(s)$ for \textbf{Case I:} non-convex elastodynamics  and  \textbf{Case II:} hyperbolic-elliptic problem corresponding to Figures \ref{fig1} and \ref{fig2}, respectively. Then we give the detailed statement of the main result, Theorem \ref{thm:main-pre}, in each case, followed by some relevant byproducts.

\textbf{(Case I):} For the problem of non-convex elastodynamics, we impose the following conditions on the stress $\sigma:(-1,\infty)\to\R$ (see Figure \ref{fig1}).

\textbf{Hypothesis (NC):} There exist two numbers $s_2>s_1>-1$ with the following properties:
\begin{itemize}
\item[(a)] $\sigma\in C^3((-1,s_1)\cup(s_2,\infty))\cap C((-1,s_1]\cup[s_2,\infty))$.
\item[(b)] $\displaystyle\lim_{s\to -1^+}\sigma(s)=-\infty$.
\item[(c)] $\sigma:(s_1,s_2)\to\R$ is bounded and measurable.
\item[(d)] $\sigma(s_1)>\sigma(s_2)$, and $\sigma'(s)>0$ for all $s\in (-1,s_1)\cup(s_2,\infty)$.
\item[(e)] There exist two numbers $c>0$ and $s_1+1>\rho>0$  such that  $\sigma'(s)\ge c$ for all $s\in (-1,s_1-\rho]\cup[s_2+\rho,\infty)$.
\item[(f)]
Let $s_1^*\in (-1,s_1)$ and $s^*_2\in(s_2,\infty)$ denote the unique numbers with $\sigma(s_1^*)=\sigma(s_2)$ and $\sigma(s_2^*)=\sigma(s_1)$, respectively.
\end{itemize}

\textbf{(Case II):} For the hyperbolic-elliptic problem, we assume the following for the constitutive function $\sigma:\R\to\R$ (see Figure \ref{fig2}).

\textbf{Hypothesis (HE):} There exist two numbers $s_2>s_1$ satisfying the following:
\begin{itemize}
\item[(a)] $\sigma\in C^3((-\infty,s_1)\cup(s_2,\infty))\cap C((-\infty,s_1]\cup[s_2,\infty))$.
\item[(b)] $\sigma:(s_1,s_2)\to\R$ is bounded and measurable.
\item[(c)] $\sigma(s_1)>\sigma(s_2)$, and $\sigma'(s)>0$ for all $s\in (-\infty,s_1)\cup(s_2,\infty)$.
\item[(d)] There exists a number $c>0$  such that  $\sigma'(s)\ge c$ for all $s\in (-\infty,s_1-1]\cup[s_2+1,\infty)$.
\item[(e)]
Let $s_1^*\in (-\infty,s_1)$ and $s^*_2\in(s_2,\infty)$ denote the unique numbers with $\sigma(s_1^*)=\sigma(s_2)$ and $\sigma(s_2^*)=\sigma(s_1)$, respectively.
\end{itemize}

In both cases, for each $r\in (\sigma(s_2),\sigma(s_1))$, let $s_-(r)\in(s_1^*,s_1)$ and $s_+(r)\in(s_2,s_2^*)$ denote the unique numbers with $\sigma(s_\pm(r))=r.$ We may call the interval $(s_1^*,s_2^*)$ as the \emph{phase transition zone} of problem (\ref{ib-P}), since the formation of microstructures and breakdown of uniqueness of weak solutions to (\ref{ib-P}) begin to occur whenever the range of the  initial strain $g'$  overlaps with the interval $(s_1^*,s_2^*)$ as we can see below from Theorems \ref{thm:main-NCE} and \ref{thm:main-HEP} and their corollaries.

We now state the main result on \textbf{Case I:} weak solutions for non-convex elastodynamics under Hypothesis (NC).
\begin{thm}\label{thm:main-NCE}
Let $(g,h)\in W^{3,2}_0(\Omega)\times W^{2,2}_0(\Omega)$ satisfy $g'(x)>-1$ for all $x\in\bar\Omega$ and $s_1^*<g' (x_0)<s_2^*$ for some $x_0\in\Omega$. Let $\sigma(s_2)<r_1<r_2<\sigma(s_1)$ be any two numbers with $s_-(r_1)<g'(x_0)<s_+(r_2)$. Then there exist a finite number $T>0$, a function $\displaystyle{u^*\in \cap_{k=0}^3C^k([0,T];W^{3-k,2}_0(\Omega))}$ with $u^*_x>-1$ on $\bar\Omega_T$, where $W^{0,2}_0(\Omega)=L^2(\Omega)$, and three disjoint open sets $\Omega_T^1,\Omega_T^2,\Omega_T^3\subset \Omega_T$ with $\Omega_T^2\not=\emptyset$, $\partial\Omega_T^1\cap\partial\Omega_T^3=\emptyset$, and \begin{equation}\label{sep-domain-NCE}
\left\{\begin{array}{l}
         \partial\Omega_T^1\cap \Omega_0=\{(x,0)\,|\, x\in\Omega,\,g'(x)<s_-(r_1)\}, \\
         \partial\Omega_T^2\cap \Omega_0  =\{(x,0)\,|\, x\in\Omega,\,s_-(r_1)<g'(x)<s_+(r_2)\}, \\
         \partial\Omega_T^3\cap \Omega_0=\{(x,0)\,|\, x\in\Omega,\,g'(x)>s_+(r_2)\},
       \end{array}
 \right.
\end{equation}
where $\Omega_0=\Omega\times\{t=0\},$
such that for each $\epsilon>0$, there exist a number $T_\epsilon\in(0,T)$ and infinitely many weak solutions $u\in W^{1,\infty}_{u^*}(\Omega_T)$ to problem (\ref{ib-P}) with the following properties:
\begin{itemize}
\item[(a)] Approximate initial rate of change:
\[
\|u_t-h\|_{L^\infty(\Omega_{T_\epsilon})}<\epsilon,
\]
where $\Omega_{T_\epsilon}=\Omega\times(0,T_\epsilon)$.


\item[(b)] Classical part of $u$:
\begin{itemize}
\item[(i)] $u\equiv u^*$ on $\overline{\Omega_T^1\cup\Omega_T^3}$,

\item[(ii)] $u_t(x,0)=h(x)\quad\forall (x,0)\in(\partial\Omega_T^1\cup\partial\Omega_T^3)\cap\Omega_0$,

\item[(iii)] $u_x(x,t)\left\{\begin{array}{ll}
                        \in(-1,s_-(r_1)) & \forall(x,t)\in\Omega_T^1, \\
                        >s_+(r_2) & \forall(x,t)\in\Omega_T^3.
                      \end{array}
 \right.$
\end{itemize}

\item[(c)] Microstructure of $u$:


$u_x(x,t)\in[s_-(r_1),s_-(r_2)]\cup[s_+(r_1),s_+(r_2)]$, a.e. $(x,t)\in\Omega_T^2$.

\item[(d)] Interface of $u$:

$u_x(x,t)\in\{s_-(r_1),s_+(r_2)\}$, a.e. $(x,t)\in\Omega_T\setminus (\cup_{i=1}^3\Omega_T^i)$.

\end{itemize}
\end{thm}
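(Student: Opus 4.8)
The plan is to recast (\ref{ib-P}) as a homogeneous partial differential inclusion with one linear curl-type constraint, to manufacture an explicit reference ``subsolution'' from a classical solution of a regularized quasilinear wave equation, and then to invoke the density fact (Theorem \ref{thm:density}) together with Baire's category theorem. For the reduction, writing $v=u_x$, $m=u_t$, equation (\ref{main-P}) is the closedness of the $1$-form $m\,dx+\sigma(v)\,dt$, so (granting $u_x>-1$) a function $u\in W^{1,\infty}(\Omega_T)$ is a weak solution of (\ref{ib-P}) with datum $(g,h)$ if and only if there is $w\in W^{1,\infty}(\Omega_T)$ with
\[
\nabla(u,w)=\begin{pmatrix}u_x & u_t\\ w_x& w_t\end{pmatrix}\in K:=\bigl\{M\in \M^{2\times2}:\ M_{12}=M_{21},\ M_{11}>-1,\ M_{22}=\sigma(M_{11})\bigr\}\ \text{a.e.},
\]
together with $u(0,\cdot)=u(1,\cdot)=0$, $u(\cdot,0)=g$, $w(x,0)=\int_0^x h(y)\,dy$; here $M_{12}=M_{21}$, i.e. $u_t=w_x$, is the linear constraint, and the equivalence follows from the integration by parts converting (\ref{def:sol}) into $\int_{\Omega_T}(w_x\varphi_t-w_t\varphi_x)=-\int_0^1 w_x(\cdot,0)\varphi(\cdot,0)\,dx$ together with the Poincar\'e lemma for bounded closed forms on the simply connected $\Omega_T$.

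Next I would build the reference pair. Fix a strictly increasing $\tilde\sigma\in C^2((-1,\infty))$ coinciding with $\sigma$ on $(-1,s_-(r_1)]\cup[s_+(r_2),\infty)$ and increasing from $r_1$ to $r_2$ on $[s_-(r_1),s_+(r_2)]$. Then $u_{tt}=\tilde\sigma(u_x)_x$ with datum $(g,h)$ and zero Dirichlet data is a quasilinear wave equation of hyperbolic type, so local well-posedness for such equations yields $T>0$ and $u^*\in\cap_{k=0}^3 C^k([0,T];W^{3-k,2}_0(\Omega))$; since $u^*_x\in C(\bar\Omega_T)$ with $u^*_x(\cdot,0)=g'$, after shrinking $T$ we may assume $u^*_x>-1$ on $\bar\Omega_T$ and $\|u^*_x-g'\|_{C(\bar\Omega_T)}$ as small as we like. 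Put $\Omega_T^1=\{u^*_x<s_-(r_1)\}$, $\Omega_T^2=\{s_-(r_1)<u^*_x<s_+(r_2)\}$, $\Omega_T^3=\{u^*_x>s_+(r_2)\}$; these are disjoint, open, $\Omega_T^2\neq\emptyset$ (as $s_-(r_1)<g'(x_0)<s_+(r_2)$), and since $\overline{\{g'<s_-(r_1)\}}$ and $\overline{\{g'>s_+(r_2)\}}$ are disjoint in $\bar\Omega$, for small $T$ (and generic $r_1<r_2$) one gets $\partial\Omega_T^1\cap\partial\Omega_T^3=\emptyset$ and (\ref{sep-domain-NCE}). On $\overline{\Omega_T^1\cup\Omega_T^3}$ one has $\tilde\sigma(u^*_x)=\sigma(u^*_x)$, so $u^*$ already classically solves the original equation there, which yields (b)(i),(iii) and, via $u^*_t(\cdot,0)=h$, (b)(ii); while on $\Omega_T\setminus\cup_i\Omega_T^i$ one has $u^*_x\in\{s_-(r_1),s_+(r_2)\}$, which will give (d). Finally let $w^*$ solve $w^*_x=u^*_t$, $w^*_t=\tilde\sigma(u^*_x)$, $w^*(x,0)=\int_0^x h$; then $\nabla(u^*,w^*)\in K$ on $\overline{\Omega_T^1\cup\Omega_T^3}$, and on $\Omega_T^2$ it satisfies the linear constraint with $M_{22}=\tilde\sigma(M_{11})$ and $M_{11}=u^*_x\in(s_-(r_1),s_+(r_2))$.

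Finally I would apply the density fact. The rank-one analysis of $K$ (Section \ref{sec:rank-1}) shows that two matrices in $K$ with equal $(1,2)$-entry whose $(1,1)$-entries are the two phases $s_\pm(r)$ at a common stress level $r$ have rank-one difference proportional to $e_1\otimes e_1$; hence the band set $\{M\in K:\ M_{11}\in[s_-(r_1),s_-(r_2)]\cup[s_+(r_1),s_+(r_2)]\}$ has a lamination-convex hull containing an open neighborhood of $\{M:\ M_{12}=M_{21},\ s_-(r_1)<M_{11}<s_+(r_2),\ M_{22}=\tilde\sigma(M_{11})\}$, provided $\tilde\sigma$, the pair $r_1<r_2$ with $s_-(r_1)<g'(x_0)<s_+(r_2)$, and $T$ (so that $u^*_t$, hence $M_{12}$, stays in the admissible window) are calibrated appropriately; this is the heart of the matter, and it rests on Section \ref{sec:rank-1}. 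Granting it, $(u^*,w^*)$ is an admissible subsolution on $\Omega_T^2$, so Theorem \ref{thm:density} produces, for each $\epsilon>0$, a number $T_\epsilon\in(0,T)$ and infinitely many Lipschitz pairs $(u,w)$ with $\nabla(u,w)\in K$ a.e. in $\Omega_T^2$, with boundary trace $(u^*,w^*)$ on $\partial\Omega_T^2$, with $u_x\in[s_-(r_1),s_-(r_2)]\cup[s_+(r_1),s_+(r_2)]$ a.e. in $\Omega_T^2$, with $\|u-u^*\|_{L^\infty(\Omega_T^2)}$ small, and --- using continuity of $u^*_t$ up to $t=0$ with trace $h$ --- with the initial-rate bound (a). Extending $(u,w)$ by $(u^*,w^*)$ outside $\Omega_T^2$ and undoing the reduction of Step~1 gives infinitely many weak solutions $u\in W^{1,\infty}_{u^*}(\Omega_T)$ of (\ref{ib-P}) satisfying (a)--(d). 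The main obstacle is exactly the hull computation and the calibration just described; the reduction, the local solvability of the regularized equation, the region bookkeeping, and the Baire-category passage from subsolution to infinitely many solutions are routine given Theorem \ref{thm:density}.
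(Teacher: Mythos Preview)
Your proposal is correct and follows essentially the same route as the paper: recast (\ref{ib-P}) as the differential inclusion $\nabla(u,w)\in\Sigma_\sigma$ with the linear constraint $u_t=w_x$, build the reference pair $(u^*,w^*)$ from a classical solution of the regularized hyperbolic problem with a strictly increasing $\tilde\sigma$ (the paper's $\sigma^*$), define $\Omega_T^1,\Omega_T^2,\Omega_T^3$ via the level sets of $u^*_x$, and then combine the density fact (Theorem~\ref{thm:density}) with Baire's category theorem to extract infinitely many exact solutions on $\Omega_T^2$. Two small clarifications: Theorem~\ref{thm:density} is only the density statement ($\mathcal A_\delta$ dense in $\mathcal A$ in $L^\infty$), and it is the Baire argument on the closure $\mathcal X$ that actually produces the solutions and the ``infinitely many''; also $\partial\Omega_T^1\cap\partial\Omega_T^3=\emptyset$ follows directly from continuity of $u^*_x$ (no shrinking of $T$ or genericity of $r_1,r_2$ needed), and $T_\epsilon$ is fixed from $\|u^*_t-h\|_{L^\infty(\Omega_{T_\epsilon})}<\epsilon/2$ before the admissible class is defined rather than being produced by the density theorem.
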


As a remark, note that corresponding deformations of the elastic bar, $d(x,t)=u(x,t)+x$, should satisfy \[
d_x(x,t)=u_x(x,t)+1 >-1+1=0,\;\;\mbox{a.e. $(x,t)\in\Omega_T$;}
\]
this guarantees that for a.e. $t\in(0,T)$, such deformations $d:[0,1]\times\{t\}\to[0,1]$ are strictly increasing with $d(0,t)=0$ and $d(1,t)=1$. Moreover, for such a $t\in(0,T)$, the deformations $d(x,t)$ are smooth (as much as the initial displacement $g$) for the values of $x\in[0,1]$ for which slope $d_x(x,t)\in(0,s_-(r_1)+1)\cup(s_+(r_2)+1,\infty)$ and are Lipschitz elsewhere with $d_x(x,t)\in[s_-(r_1)+1,s_-(r_2)+1]\cup[s_+(r_1)+1,s_+(r_2)+1]$ a.e. Therefore, these dynamic deformations fulfill a natural physical requirement of invertibility   for the motion of an elastic bar not allowing interpenetration.

As byproducts of Theorem \ref{thm:main-NCE}, we obtain the following two results for non-convex elastodynamics. The first is on local existence of weak solutions to problem (\ref{ib-P}) for all  smooth initial data. The second gives local weak solutions to (\ref{ib-P}) that are all identical and smooth for a short period of time and then evolve microstructures along with the breakdown of uniqueness for some smooth initial data.

\begin{coro}\label{coro:weak-NCE}
For any initial datum $(g,h)\in W^{3,2}_0(\Omega)\times W^{2,2}_0(\Omega)$ with $g'>-1$ on $\bar\Omega$, there exists a finite number $T>0$ for which problem (\ref{ib-P}) has a weak solution.
\end{coro}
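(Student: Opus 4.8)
The goal is to deduce Corollary \ref{coro:weak-NCE} from Theorem \ref{thm:main-NCE}, so the proof proposal must address the one gap between them: Theorem \ref{thm:main-NCE} requires the initial strain to meet the phase transition zone ($s_1^*<g'(x_0)<s_2^*$ at some $x_0$), whereas the corollary allows \emph{any} smooth datum with $g'>-1$. My plan is therefore a dichotomy argument. If $g'$ does hit the phase transition zone at some interior point $x_0$, then Theorem \ref{thm:main-NCE} (after choosing, say, $r_1,r_2$ with $\sigma(s_2)<r_1<r_2<\sigma(s_1)$ and $s_-(r_1)<g'(x_0)<s_+(r_2)$, which is possible since $s_-(r)\to s_1^*$, $s_+(r)\to s_2^*$ in the appropriate limits) directly hands us a weak solution on some $[0,T]$, and we are done.

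In the complementary case, $g'(x)$ lies in the ``elliptic-free'' region $(-1,s_1^*]\cup[s_2^*,\infty)$ for \emph{all} $x\in\bar\Omega$ — in fact, since $\bar\Omega$ is connected and $g'$ is continuous (indeed $g\in W^{3,2}_0\subset C^2(\bar\Omega)$), the image $g'(\bar\Omega)$ is a connected set avoiding the open interval $(s_1^*,s_2^*)$, so it lies entirely in $(-1,s_1^*]$ or entirely in $[s_2^*,\infty)$. On either of these intervals $\sigma$ is $C^3$ and uniformly elliptic (by Hypothesis (NC)(d),(e): $\sigma'\ge c>0$ on a neighborhood of the relevant closed interval once we also shrink below $s_1$ or note $s_1^*<s_1-\rho$ may fail — here one should be a bit careful and instead invoke that $\sigma'>0$ on all of $(-1,s_1)\cup(s_2,\infty)$ together with local positivity, giving local ellipticity on a neighborhood of the compact set $g'(\bar\Omega)$). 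Then (\ref{ib-P}) is, near this data, a genuinely (locally) quasilinear hyperbolic equation with smooth coefficients, and the classical local existence theory for such initial-boundary value problems — e.g. the result of Dafermos and Hrusa \cite{DH} cited in the introduction, or standard energy-method/symmetric-hyperbolic arguments — yields a classical solution $u\in\cap_{k=0}^{2}C^k([0,T];W^{3-k,2}_0(\Omega))$ on some short time interval $[0,T]$, whose strain $u_x$ stays (by continuity, shrinking $T$) inside the region where $\sigma$ is smooth and elliptic and $u_x>-1$. Such a classical solution is in particular a weak solution in the sense of (\ref{def:sol})--(\ref{def:sol-1}), via integration by parts, so the corollary holds in this case too.

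The two cases exhaust all admissible data, so the corollary follows. I would write this up in a short paragraph: state the dichotomy on whether $g'(\bar\Omega)\cap(s_1^*,s_2^*)=\emptyset$; in the non-empty case quote Theorem \ref{thm:main-NCE} with a suitable choice of $r_1<r_2$; in the empty case use connectedness of $g'(\bar\Omega)$ plus the structural hypotheses on $\sigma$ to reduce to a uniformly hyperbolic problem with smooth stress and invoke classical local well-posedness, then observe a classical solution is weak.

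The main obstacle, and the only point requiring genuine care, is the second case: one must confirm that on the compact connected set $K=g'(\bar\Omega)\subset(-1,s_1^*]\cup[s_2^*,\infty)$ the stress $\sigma$ is both $C^3$ (or at least $C^2$) and uniformly elliptic on a neighborhood, so that the compatibility/regularity hypotheses of the classical local existence theorem apply and the strain does not immediately escape that neighborhood; this is exactly where Hypothesis (NC)(a),(d),(e) and the definition of $s_1^*,s_2^*$ in (f) get used, and it is essentially the same (easier) analysis that underlies the construction of the ``classical part'' $u^*$ in Theorem \ref{thm:main-NCE}. Everything else — the choice of $r_1,r_2$, the integration by parts showing a classical solution satisfies (\ref{def:sol}) — is routine.
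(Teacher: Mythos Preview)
Your proposal is correct and matches the paper's own proof: the same dichotomy on whether $g'(\bar\Omega)$ meets $(s_1^*,s_2^*)$, quoting Theorem~\ref{thm:main-NCE} when it does and invoking Dafermos--Hrusa local existence when it does not. The paper streamlines your second case by applying \cite{DH} not to $\sigma$ itself but to the globally $C^3$, uniformly hyperbolic modified stress $\sigma^*$ from (\ref{modi}), then shrinking $T$ so that $u^*_x$ stays in the region where $\sigma^*=\sigma$; this sidesteps the local-versus-global regularity issue you correctly flag as the ``main obstacle.''
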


\begin{coro}\label{coro:weak-NCE-micro}
Let $(g,h)\in W^{3,2}_0(\Omega)\times W^{2,2}_0(\Omega)$ satisfy $g'>-1$ on $\bar\Omega$. Assume $\max_{\bar\Omega}g'\in (s_1^*,s_1)$ or $\min_{\bar\Omega}g'\in (s_2,s_2^*)$. Then there exist finite numbers $T>T'>0$ such that problem (\ref{ib-P}) admits infinitely many weak solutions that are all equal to some $\displaystyle{u^*\in \cap_{k=0}^3C^k([0,T'];W^{3-k,2}_0(\Omega))}$ in $\Omega_{T'}$ and evolve microstructures from $t=T'$  as in Theorem \ref{thm:main-NCE}.
\end{coro}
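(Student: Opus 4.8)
The plan is to deduce this corollary from Theorem \ref{thm:main-NCE} by first running the classical hyperbolic flow until microstructures are about to form, and then invoking the main theorem with shifted initial time. Concretely, suppose $\max_{\bar\Omega}g'\in(s_1^*,s_1)$ (the case $\min_{\bar\Omega}g'\in(s_2,s_2^*)$ being symmetric). Since the range of $g'$ is a compact subinterval of $(-1,s_1)$, on which $\sigma$ is $C^3$ and uniformly elliptic by Hypothesis (NC)(d)--(e), the modified quasilinear wave equation $u_{tt}=\sigma(u_x)_x$ with the given smooth data $(g,h)$ — where $\sigma$ on $(-1,s_1)$ is a genuinely hyperbolic flux — admits, by the local existence theory for smooth solutions (Dafermos--Hrusa \cite{DH}, as cited), a unique classical solution $u^*\in\cap_{k=0}^3 C^k([0,T_0];W^{3-k,2}_0(\Omega))$ on some maximal-type interval $[0,T_0]$ on which $u^*_x$ stays inside a fixed compact subset of $(-1,s_1)$. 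Crucially, since $\max_{\bar\Omega}g'>s_1^*$, by continuity there is a first time $T'\in(0,T_0)$ and a point $x_0\in\bar\Omega$ at which $u^*_x(x_0,T')$ first reaches a value in the phase transition zone, i.e. $s_1^*<u^*_x(x_0,T')<s_2^*$; by shrinking $T'$ slightly we may take $x_0\in\Omega$ and arrange $s_1^*<u^*_x(x_0,T')$ while still $u^*$ is classical and smooth on $\overline{\Omega_{T'}}$.

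Next I would apply Theorem \ref{thm:main-NCE} with the new initial datum $(\tilde g,\tilde h)=(u^*(\cdot,T'),u^*_t(\cdot,T'))\in W^{3,2}_0(\Omega)\times W^{2,2}_0(\Omega)$, which by construction satisfies $\tilde g'>-1$ on $\bar\Omega$ and $s_1^*<\tilde g'(x_0)<s_2^*$. This produces a finite time $T-T'>0$ and infinitely many weak solutions on $\Omega\times(T',T)$ with the asserted microstructure; translating back to the time origin and gluing these onto $u^*|_{\Omega_{T'}}$ gives infinitely many weak solutions on $\Omega_T$ that agree with $u^*$ on $\Omega_{T'}$ and develop microstructures for $t>T'$. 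One should check two gluing points: first, that the concatenated functions are genuinely weak solutions in the sense of \eqref{def:sol}--\eqref{def:sol-1} across the interface $t=T'$ — this follows because $u^*$ is classical (hence its trace and $u^*_t$-trace at $T'$ match the data fed into Theorem \ref{thm:main-NCE}), and the distributional formulation integrates cleanly across $t=T'$ once both one-sided pieces satisfy their respective weak formulations with matching Cauchy data; second, that $u\in W^{1,\infty}(\Omega_T)$, which holds since $u^*$ is smooth on the compact set $\overline{\Omega_{T'}}$ and the Theorem \ref{thm:main-NCE} piece is in $W^{1,\infty}$.

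The main obstacle I anticipate is the matching at $t=T'$: Theorem \ref{thm:main-NCE} only guarantees an \emph{approximate} initial rate of change (property (a), $\|u_t-\tilde h\|_{L^\infty(\Omega_{T_\epsilon})}<\epsilon$) rather than exact attainment of $\tilde h=u^*_t(\cdot,T')$, so the naive concatenation need not have a distributional time-derivative without a jump along $t=T'$, which would spoil \eqref{def:sol}. To handle this I would look more closely at how Theorem \ref{thm:main-NCE} is actually proved — presumably the weak solutions there are built as $u=u^*+(\text{correction supported where microstructure lives})$, with the correction having zero trace and controlled $u_t$ near $t=0$, and in fact one expects the construction to give $u=u^*$ on the "classical part" $\overline{\Omega_T^1\cup\Omega_T^3}$ and the interface data to be attained exactly in the trace sense. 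If the construction yields exact initial-data attainment in the trace sense for the glued object (which the statement's clause (b)(ii) and the structure of (d) strongly suggest), the concatenation is legitimate; otherwise one absorbs the $\epsilon$-discrepancy by noting it only affects an arbitrarily thin slab $\Omega\times(T',T'+T_\epsilon)$ and re-chooses $T'$ and $\epsilon$ so that the relevant time-derivative trace across $t=T'$ still agrees, using that $u^*$ extends as the common classical background. I would phrase the final argument so that the smooth background $u^*$ on $[0,T']$ and the extension furnished by Theorem \ref{thm:main-NCE} share the same Cauchy data at $T'$ by design, reducing the corollary to a one-line translation-in-time plus the observation that $T'$ can be taken strictly positive because $\max_{\bar\Omega}g'$ lies strictly below $s_1$.
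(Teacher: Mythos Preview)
Your overall architecture matches the paper's proof: run a classical hyperbolic solution on $[0,T']$, apply Theorem~\ref{thm:main-NCE} with initial data $(u^*(\cdot,T'),u^*_t(\cdot,T'))$, and glue. Two points need correction.

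First, your selection of $T'$ rests on a misconception. You claim there is a ``first time $T'\in(0,T_0)$ at which $u^*_x$ first reaches a value in the phase transition zone $(s_1^*,s_2^*)$.'' But $\max_{\bar\Omega}g'\in(s_1^*,s_1)\subset(s_1^*,s_2^*)$, so $u^*_x=g'$ is \emph{already} in the phase transition zone at $t=0$; there is no entry time to wait for, and your subsequent ``shrinking $T'$ slightly'' is then self-contradictory. The content of the corollary is precisely that the subinterval $(s_1^*,s_1)$ lies in the hyperbolic regime of $\sigma$, so one may run a classical solution for a positive time \emph{despite} the strain being in the phase transition zone, and only then trigger microstructures. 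The correct choice is simply any small $T'>0$ for which the classical solution exists with $u^*_x$ in a compact subset of $(-1,s_1)$ and $u^*_x(x_0,T')>s_1^*$ for some $x_0\in\Omega$ (the latter automatic by continuity from $t=0$). The paper implements this by first fixing $r_1,r_2$ with $s_-(r_1)>\max_{\bar\Omega}g'$, solving the \emph{modified} problem with $\sigma^*$ from \eqref{modi}, and taking $T'$ small enough that $u^*_x\le s_-(r_1)$ on $\bar\Omega_{T'}$; since $\sigma^*=\sigma$ on $(-1,s_-(r_1)]$, this $u^*$ solves the original equation on $\Omega_{T'}$.

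Second, your gluing worry is misplaced. The weak solutions furnished by Theorem~\ref{thm:main-NCE} satisfy the identity \eqref{def:sol} \emph{exactly}, with the prescribed initial velocity appearing on the right-hand side; property~(a) is an additional $L^\infty$ estimate on $u_t$ near $t=0$, not a relaxation of \eqref{def:sol}. Consequently, for any $\varphi\in C^\infty_c(\Omega\times[0,T))$, one splits the time integral at $t=T'$, integrates by parts on the classical piece $[0,T']$, and invokes \eqref{def:sol} (shifted to $t=T'$) on $[T',T]$; the boundary terms at $t=T'$ cancel exactly because both involve $\int_\Omega u^*_t(x,T')\varphi(x,T')\,dx$. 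No $\epsilon$-management is needed.
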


The following is the main result on \textbf{Case II:}  hyperbolic-elliptic equations under Hypothesis (HE).

\begin{thm}\label{thm:main-HEP}
Let $(g,h)\in W^{3,2}_0(\Omega)\times W^{2,2}_0(\Omega)$ with $s_1^*<g' (x_0)<s_2^*$ for some $x_0\in\Omega$. Let $\sigma(s_2)<r_1<r_2<\sigma(s_1)$ be any two numbers with $s_-(r_1)<g'(x_0)<s_+(r_2)$. Then there exist a finite number $T>0$, a function $\displaystyle{u^*\in \cap_{k=0}^3C^k([0,T];W^{3-k,2}_0(\Omega))}$, and three disjoint open sets $\Omega_T^1,\Omega_T^2,\Omega_T^3\subset \Omega_T$ with $\Omega_T^2\not=\emptyset$, $\partial\Omega_T^1\cap\partial\Omega_T^3=\emptyset$, and \begin{equation}\label{sep-domain-HEP}
\left\{\begin{array}{l}
         \partial\Omega_T^1\cap \Omega_0=\{(x,0)\,|\, x\in\Omega,\,g'(x)<s_-(r_1)\}, \\
         \partial\Omega_T^2\cap \Omega_0  =\{(x,0)\,|\, x\in\Omega,\,s_-(r_1)<g'(x)<s_+(r_2)\}, \\
         \partial\Omega_T^3\cap \Omega_0=\{(x,0)\,|\, x\in\Omega,\,g'(x)>s_+(r_2)\}
       \end{array}
 \right.
\end{equation}
such that for each $\epsilon>0$, there exist a number $T_\epsilon\in(0,T)$ and infinitely many weak solutions $u\in W^{1,\infty}_{u^*}(\Omega_T)$ to problem (\ref{ib-P}) satisfying the following properties:
\begin{itemize}
\item[(a)] Approximate initial rate of change:
\[
\|u_t-h\|_{L^\infty(\Omega_{T_\epsilon})}<\epsilon.
\]


\item[(b)] Classical part of $u$:
\begin{itemize}
\item[(i)] $u\equiv u^*$ on $\overline{\Omega_T^1\cup\Omega_T^3}$,

\item[(ii)] $u_t(x,0)=h(x)\quad\forall (x,0)\in(\partial\Omega_T^1\cup\partial\Omega_T^3)\cap\Omega_0$,

\item[(iii)] $u_x(x,t)\left\{\begin{array}{cc}
                        <s_-(r_1) & \forall(x,t)\in\Omega_T^1, \\
                        >s_+(r_2) & \forall(x,t)\in\Omega_T^3.
                      \end{array}
 \right.$
\end{itemize}

\item[(c)] Microstructure of $u$:


$u_x(x,t)\in[s_-(r_1),s_-(r_2)]\cup[s_+(r_1),s_+(r_2)]$, a.e. $(x,t)\in\Omega_T^2$.

\item[(d)] Interface of $u$:

$u_x(x,t)\in\{s_-(r_1),s_+(r_2)\}$, a.e. $(x,t)\in\Omega_T\setminus (\cup_{i=1}^3\Omega_T^i)$.

\end{itemize}
\end{thm}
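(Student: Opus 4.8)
The plan is to reduce both Theorem~\ref{thm:main-NCE} and Theorem~\ref{thm:main-HEP} to a single partial differential inclusion and then apply a Baire-category / convex-integration argument, with the heavy lifting delegated to the density fact (Theorem~\ref{thm:density}) announced in the introduction. First I would reformulate problem~(\ref{ib-P}) as a first-order system: setting $v=u_x$, $w=u_t$, the equation becomes $v_t=w_x$, $w_t=\sigma(v)_x$. Introducing an extra stress variable $\tau$ (to decouple the nonlinearity from the constraint), one writes the problem as seeking $(v,w,\tau)\in L^\infty(\Omega_T;\R^3)$ with $v_t=w_x$, $w_t=\tau_x$ in the sense of distributions (a \emph{linear} pair of PDE constraints, encodable via a potential), together with the pointwise nonlinear inclusion $\tau=\sigma(v)$, i.e.\ $(v,\tau)\in\operatorname{graph}\sigma$ a.e. The linear part says $(v,w,\tau)=\nabla$-type derivatives of a pair of potentials; concretely $u$ itself is one potential ($v=u_x$, $w=u_t$) and $\tau$ is captured by a second potential $\psi$ with $\psi_x=w=u_t$, $\psi_t=\tau$, which exists precisely because $w_t=\tau_x$. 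So the unknown is $(u,\psi)$, a map $\Omega_T\to\R^2$, and the inclusion is $\nabla(u,\psi)\in K$ a.e.\ for a suitable set $K\subset\M^{2\times2}$ built from $\operatorname{graph}\sigma$.

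Next I would build the \textbf{subsolution} $u^*$. Away from the phase-transition zone $\sigma$ is smooth and strictly increasing (Hypothesis (NC)(d),(e) or (HE)(c),(d)), so on the region where $g'$ avoids $(s_1^*,s_2^*)$ the equation is a genuine quasilinear wave equation and, by Dafermos--Hrusa-type local existence for the modified (globally hyperbolic, strictly increasing) stress, there is a short-time classical solution $u^*\in\cap_{k=0}^3 C^k([0,T];W^{3-k,2}_0(\Omega))$ on all of $\Omega_T$ for $T$ small; this is exactly where the $W^{3,2}\times W^{2,2}$ regularity of $(g,h)$ and the bound $g'>-1$ (Case I) are used, and where finiteness of $T$ enters (possible shock formation). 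Using the continuity of $u^*_x$ on $\bar\Omega_T$ and the hypothesis $s_1^*<g'(x_0)<s_2^*$ with $s_-(r_1)<g'(x_0)<s_+(r_2)$, I would define the three open sets by
\[
\Omega_T^1=\{u^*_x<s_-(r_1)\},\quad \Omega_T^3=\{u^*_x>s_+(r_2)\},\quad \Omega_T^2=\operatorname{int}\{s_-(r_1)\le u^*_x\le s_+(r_2)\}^{\circ\prime},
\]
choosing $T$ further shrunk so that these have the prescribed traces~(\ref{sep-domain-NCE}) / (\ref{sep-domain-HEP}) on $\Omega_0$, that $\partial\Omega_T^1\cap\partial\Omega_T^3=\emptyset$, and that $\Omega_T^2\ne\emptyset$; on $\overline{\Omega_T^1\cup\Omega_T^3}$ we simply keep $u\equiv u^*$, which already satisfies the equation classically there and gives properties (b)(i)--(iii). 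The set $\Omega_T^2$ is where microstructure is manufactured.

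Then comes the \textbf{category step}. On $\Omega_T^2$ the value of $u^*_x$ lies in $[s_-(r_1),s_+(r_2)]$, and the target two-well structure is: $u_x$ should take values in the rank-one-connected (in the potential formulation) "laminate hull" of the two pieces $[s_-(r_1),s_-(r_2)]\cup[s_+(r_1),s_+(r_2)]$ of $\operatorname{graph}\sigma$, with interface values $\{s_-(r_1),s_+(r_2)\}$. One sets up a complete metric space $\mathcal U$ of Lipschitz subsolutions $u$ agreeing with $u^*$ outside $\Omega_T^2$, with $u_t$ close to $h$ in $L^\infty(\Omega_{T_\epsilon})$, with $\nabla(u,\psi)$ lying in an open neighborhood / laminate-convex interior of $K$ on $\Omega_T^2$, endowed with $C^0$-type (or weak-$*$) distance; the density fact Theorem~\ref{thm:density} asserts that the set of $u\in\mathcal U$ that are \emph{exact} solutions — i.e.\ $\nabla(u,\psi)\in K$ a.e., equivalently $\tau=\sigma(u_x)$ a.e. — is \emph{residual} (dense $G_\delta$) in $\mathcal U$, hence nonempty and in fact of the cardinality of the continuum once $\mathcal U$ contains more than one point (a perturbation/in-approximation construction shows it does). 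Intersecting over a sequence $\epsilon_n\to0$ and over refinements producing the boundary behavior gives infinitely many weak solutions with properties (a)--(d); property (c) (range of $u_x$ in the two wells) and (d) (interface values) are read off from the definition of $K$ and the support of the laminates used in $\mathcal U$, and property (a) from the $u_t\approx h$ constraint, which can be arranged because near $t=0$ the subsolution is still essentially classical.

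The \textbf{main obstacle} is the density fact Theorem~\ref{thm:density} itself — establishing that one-point-or-more implies residually-many exact solutions. This requires (i) identifying the correct set $K\subset\M^{2\times2}$ and its rank-one / laminate-convex hull so that it is large enough to contain a nontrivial subsolution built from $u^*$ yet small enough that exact solutions have $u_x$ confined to the two prescribed wells and interfaces — the non-monotone, merely bounded-measurable branch of $\sigma$ on $(s_1,s_2)$ and the asymptotic strict monotonicity (NC)(e)/(HE)(d) must be used to get coercivity/closedness; and (ii) the Baire argument's core estimate: given a strict subsolution, producing a small Lipschitz perturbation, supported in $\Omega_T^2$, that is still a strict subsolution but increases $\int|u_x-\text{(well value)}|$ by a definite amount — this is the rank-one-connection/laminate construction, which in this hyperbolic-potential setting is delicate because the admissible perturbations must respect the linear constraint $w_t=\tau_x$ (built into the $(u,\psi)$ potential) and keep $u_x>-1$ in Case I. I would therefore expect the present statement to be proved \emph{modulo} Theorem~\ref{thm:density}, with Section~\ref{sec:rank-1} supplying the rank-one piecewise-affine building block and Section~\ref{sec:density-proof} assembling it into the density result; the contribution of this section is the reduction above plus the construction of $u^*$ and the bookkeeping of the sets $\Omega_T^i$, $T_\epsilon$, and the properties (a)--(d).
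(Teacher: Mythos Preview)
Your proposal is correct and follows essentially the same route as the paper: the potential pair $(u,\psi)$ you introduce is exactly the paper's $w=(u,v)$ with $v_x=u_t$, $v_t=\sigma(u_x)$; the subsolution $u^*$ is obtained from the Dafermos--Hrusa local theory applied to a strictly increasing $C^3$ modification $\sigma^*$ of $\sigma$; the sets $\Omega_T^i$ are the sub- and super-level sets of $u^*_x$ at $s_-(r_1)$ and $s_+(r_2)$; and the existence argument is Baire category on an admissible class of $C^2$ subsolutions, with the density theorem (Theorem~\ref{thm:density}) doing the real work and its proof deferred to Sections~\ref{sec:rank-1}--\ref{sec:density-proof}.

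Two small corrections relative to the paper's execution. First, Theorem~\ref{thm:density} as stated does \emph{not} assert that exact solutions are residual; it says only that the $\delta$-approximating class $\mathcal A_\delta$ is $L^\infty$-dense in $\mathcal A$ for every $\delta>0$. Residuality of exact solutions is obtained separately, by noting that $\nabla:\mathcal X\to L^1$ is Baire-one on the $L^\infty$-closure $\mathcal X$ of $\mathcal A$, so its continuity points $\mathcal C_\nabla$ are residual, and then using the density of $\mathcal A_\delta$ to show every $w\in\mathcal C_\nabla$ satisfies $\nabla w\in K$ a.e.\ on $\Omega_T^2$. Second, you should not ``intersect over a sequence $\epsilon_n\to0$'': the admissible class $\mathcal A$ itself depends on $\epsilon$ (through the constraint $\|u_t-h\|_{L^\infty(\Omega_{T_\epsilon})}<\epsilon'$), and the theorem only claims infinitely many solutions \emph{for each fixed} $\epsilon$, which is exactly what the Baire argument delivers once $\epsilon$ is fixed at the outset.
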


We also have the following results similar to Corollaries \ref{coro:weak-NCE} and \ref{coro:weak-NCE-micro}.

\begin{coro}\label{coro:weak-HEP}
For any initial datum $(g,h)\in W^{3,2}_0(\Omega)\times W^{2,2}_0(\Omega)$, there exists a finite number $T>0$ for which  problem (\ref{ib-P}) has a weak solution.
\end{coro}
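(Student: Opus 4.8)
The plan is to derive Corollary \ref{coro:weak-HEP} directly from Theorem \ref{thm:main-HEP}, the only extra issue being that Theorem \ref{thm:main-HEP} assumes the initial strain $g'$ attains a value inside the phase transition zone $(s_1^*,s_2^*)$ at some interior point, whereas the corollary makes no such assumption. So I would split into two cases according to whether the hypothesis of Theorem \ref{thm:main-HEP} is met or not.

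First, if there is an $x_0\in\Omega$ with $s_1^*<g'(x_0)<s_2^*$, then Theorem \ref{thm:main-HEP} applies verbatim (picking, say, any $r_1,r_2$ with $\sigma(s_2)<r_1<r_2<\sigma(s_1)$ and $s_-(r_1)<g'(x_0)<s_+(r_2)$, which is possible by continuity of $r\mapsto s_\pm(r)$ and the fact that $s_-(r)\to s_1^*$, $s_+(r)\to s_2^*$ at the endpoints), and yields (infinitely many, hence in particular one) weak solutions on some $\Omega_T$; done.

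Second, suppose $g'(x)\notin(s_1^*,s_2^*)$ for every $x\in\bar\Omega$. Since $g\in W^{3,2}_0(\Omega)\hookrightarrow C^2(\bar\Omega)$, $g'$ is continuous on the connected set $\bar\Omega$, so its range is a connected subset of $\R\setminus(s_1^*,s_2^*)$; hence either $g'\le s_1^*$ everywhere or $g'\ge s_2^*$ everywhere on $\bar\Omega$. In either case $g'$ takes values only in the region where $\sigma$ is $C^3$ and strictly monotone (with a uniform lower bound on $\sigma'$ away from the spinodal interval, by Hypothesis (HE)(c),(d)), so the problem is genuinely hyperbolic near the initial data. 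The plan here is to invoke a standard local existence theorem for smooth solutions of the quasilinear wave equation $u_{tt}=\sigma(u_x)_x$ with the given boundary conditions — this is exactly the kind of result referenced as Dafermos and Hrusa \cite{DH} in the introduction, and it is also what the paper itself produces as the ``$u^*$'' in Theorems \ref{thm:main-NCE}--\ref{thm:main-HEP}; indeed the construction there starts from such a classical solution. Concretely, there exists $T>0$ and $u^*\in\cap_{k=0}^3C^k([0,T];W^{3-k,2}_0(\Omega))$ solving $u^*_{tt}=\sigma(u^*_x)_x$ classically with $u^*(\cdot,0)=g$, $u^*_t(\cdot,0)=h$, whose strain $u^*_x$ stays in the hyperbolic region on $\bar\Omega_T$ (shrinking $T$ if necessary, using continuity in time and the openness of the hyperbolic region). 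Such a classical solution is in particular a weak solution in the sense of \eqref{def:sol}--\eqref{def:sol-1}, by integration by parts. This finishes the corollary.

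The only mild subtlety — and the main thing to be careful about — is the second case: one must be sure the classical local-existence machinery genuinely applies here, i.e. that the initial data $(g,h)$ are compatible with the Dirichlet boundary conditions to the order needed (which is built into $W^{3,2}_0\times W^{2,2}_0$ and the vanishing traces), and that the strain can be kept inside the strictly hyperbolic region for a short time; both are routine given $g'\in\bar\Omega\mapsto\R$ continuous with range disjoint from the closed-up spinodal interval and the uniform ellipticity bound $\sigma'\ge c$ there. Everything else is a direct citation of Theorem \ref{thm:main-HEP}. I would also remark that, just as in the Case I corollary, one does not even need the two-case split if one is willing to quote the paper's own construction, since the ``classical part'' $u^*$ produced in the proof of Theorem \ref{thm:main-HEP} already exists without the phase-transition hypothesis; but presenting it as above keeps the argument self-contained modulo the cited classical theory.
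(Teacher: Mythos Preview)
Your approach is essentially the same as the paper's: split on whether $g'$ hits $(s_1^*,s_2^*)$, apply Theorem \ref{thm:main-HEP} if so, and produce a classical solution via Dafermos--Hrusa if not. The one point the paper handles more carefully is that in the second case it does \emph{not} apply \cite{DH} directly to $\sigma$ (which is merely bounded measurable on $(s_1,s_2)$ and hence fails the global hypotheses of that theorem); instead it first builds the $C^3$, uniformly monotone modification $\sigma^*$ as in (\ref{modi}), applies \cite[Theorem~5.1]{DH} to the modified problem, and then shrinks $T$ so that $u^*_x\ge s_+(r_2)$ on $\bar\Omega_T$, where $\sigma^*=\sigma$. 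You gesture at this (``hyperbolic near the initial data'', ``shrinking $T$''), but to make the citation of \cite{DH} legitimate you should insert exactly this modification step.
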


\begin{coro}\label{coro:weak-HEP-micro}
Let $(g,h)\in W^{3,2}_0(\Omega)\times W^{2,2}_0(\Omega)$ satisfy $\max_{\bar\Omega}g'\in (s_1^*,s_1)$ or $\min_{\bar\Omega}g'\in (s_2,s_2^*)$. Then there exist finite numbers $T>T'>0$ such that problem (\ref{ib-P}) admits infinitely many weak solutions that are all equal to some $u^*\in \cap_{k=0}^3C^k([0,T'];$
$W^{3-k,2}_0(\Omega))$ in $\Omega_{T'}$ and evolve microstructures from $t=T'$  as in Theorem \ref{thm:main-HEP}.
\end{coro}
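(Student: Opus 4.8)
The plan is to feed Theorem~\ref{thm:main-HEP} into a two–step scheme: first run the \emph{original} problem as a genuinely hyperbolic one for a short time, and only then switch on the convex–integration mechanism of the main theorem.

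By the symmetry $s\mapsto -s$, $\sigma(s)\mapsto-\sigma(-s)$, which interchanges the two monotone branches $(-\infty,s_1)$ and $(s_2,\infty)$ and sends the condition $\min_{\bar\Omega}g'\in(s_2,s_2^*)$ to $\max_{\bar\Omega}(-g')\in(-s_2^*,-s_2)$, we may assume $\max_{\bar\Omega}g'\in(s_1^*,s_1)$. Fix $x_0\in\Omega$ with $g'(x_0)\in(s_1^*,s_1)$; such an $x_0$ exists, because if the maximum of $g'$ is attained in the interior we take a maximizer, and otherwise $\max_{\bar\Omega}g'=g'(0)=0\in(s_1^*,s_1)$ and $g'$ takes values in $(s_1^*,0)$ arbitrarily close to $\partial\Omega$ (or $g'\equiv0$). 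Since $g'<s_1$ on $\bar\Omega$, the range of $g'$ is a compact subset of $(-\infty,s_1)$ on which $\sigma'$ is continuous and positive, so $u_{tt}=\sigma(u_x)_x$ is uniformly hyperbolic near the data. Invoking the local existence theory for uniformly hyperbolic quasilinear initial–boundary value problems that underlies the construction of the classical part $u^*$ in the proof of Theorem~\ref{thm:main-HEP} (cf.\ \cite{DH}), the compatible datum $(g,h)\in W^{3,2}_0(\Omega)\times W^{2,2}_0(\Omega)$ yields a number $T_0>0$ and a unique $u^\flat\in\cap_{k=0}^3C^k([0,T_0];W^{3-k,2}_0(\Omega))$ solving (\ref{ib-P}) classically on $\Omega_{T_0}$. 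As $u^\flat_x\in C(\bar\Omega_{T_0})$ with $u^\flat_x(x_0,0)=g'(x_0)\in(s_1^*,s_1)$, we may fix $T'\in(0,T_0)$ so small that $u^\flat_x(x_0,T')\in(s_1^*,s_1)$.

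Set $g^\flat:=u^\flat(\cdot,T')\in W^{3,2}_0(\Omega)$ and $h^\flat:=u^\flat_t(\cdot,T')\in W^{2,2}_0(\Omega)$; then $s_1^*<(g^\flat)'(x_0)<s_2^*$. Choose $r_1,r_2\in(\sigma(s_2),\sigma(s_1))$ with $s_-(r_1)<(g^\flat)'(x_0)<s_+(r_2)$ (take $r_1$ near $\sigma(s_2)$, so $s_-(r_1)$ is near $s_1^*$, and $r_2$ near $\sigma(s_1)$, so $s_+(r_2)$ is near $s_2^*$). Because (\ref{ib-P}) is autonomous, Theorem~\ref{thm:main-HEP} applies verbatim to the problem on $\Omega\times(T',T)$ with the Cauchy datum $(g^\flat,h^\flat)$ prescribed at $t=T'$, producing a finite $T>T'$, a classical part $w^*\in\cap_{k=0}^3C^k([T',T];W^{3-k,2}_0(\Omega))$ with $w^*(\cdot,T')=g^\flat$, $w^*_t(\cdot,T')=h^\flat$, disjoint open sets $\Omega_T^1,\Omega_T^2,\Omega_T^3\subset\Omega\times(T',T)$ with $\Omega_T^2\neq\emptyset$, and, for each $\epsilon>0$, infinitely many weak solutions $\tilde u\in W^{1,\infty}_{w^*}(\Omega\times(T',T))$ enjoying properties (a)–(d) of that theorem there.

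For each such $\tilde u$ define $u:=u^\flat$ on $\overline{\Omega_{T'}}$ and $u:=\tilde u$ on $\Omega\times[T',T)$. Since $\Omega\times\{T'\}\subset\partial(\Omega\times(T',T))$, the trace condition gives $\tilde u(\cdot,T')=w^*(\cdot,T')=g^\flat=u^\flat(\cdot,T')$, so $u$ is continuous across $t=T'$ and hence $u\in W^{1,\infty}(\Omega_T)$. To check (\ref{def:sol}) on $\Omega_T$, split the integral at $t=T'$: on $\Omega_{T'}$, integrating by parts and using that $u^\flat$ solves (\ref{ib-P}) classically gives $\int_{\Omega_{T'}}(u^\flat_t\varphi_t-\sigma(u^\flat_x)\varphi_x)=\int_\Omega h^\flat\varphi(\cdot,T')\,dx-\int_\Omega h\varphi(\cdot,0)\,dx$, while on $\Omega\times(T',T)$ the time–translated weak formulation for $\tilde u$ with datum $h^\flat$ gives $\int_{\Omega\times(T',T)}(\tilde u_t\varphi_t-\sigma(\tilde u_x)\varphi_x)=-\int_\Omega h^\flat\varphi(\cdot,T')\,dx$; adding these, the interface contributions at $t=T'$ cancel and we recover (\ref{def:sol}). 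The conditions (\ref{def:sol-1}) are inherited from $u^\flat$ and $\tilde u$, so each $u$ is a weak solution of (\ref{ib-P}). These solutions all coincide on $\Omega_{T'}$ with the classical function $u^*:=u^\flat|_{[0,T']}\in\cap_{k=0}^3C^k([0,T'];W^{3-k,2}_0(\Omega))$, are pairwise distinct because the $\tilde u$ differ on $\Omega_T^2\neq\emptyset$, and on $\Omega\times(T',T)$ exhibit exactly the microstructure described by Theorem~\ref{thm:main-HEP}; this proves the corollary. The step requiring the most care is the first one — producing $u^\flat$ with the precise $\cap_kC^k$ regularity and keeping $u^\flat_x(x_0,\cdot)$ inside $(s_1^*,s_1)$ up to time $T'$ — together with the bookkeeping in the gluing, which closes only because $\tilde u$ attains the Cauchy datum $(g^\flat,h^\flat)$ exactly in the weak sense (\ref{def:sol}), leaving no spurious interface term at $t=T'$.
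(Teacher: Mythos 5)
Your proof is correct and follows essentially the same two-phase strategy as the paper's: evolve the data classically on a short initial interval during which the strain stays in the monotone branch, then restart Theorem~\ref{thm:main-HEP} at $t=T'$ with the datum $(u^\flat(\cdot,T'),u^\flat_t(\cdot,T'))$ and glue the weak formulations across $t=T'$. The only point worth making explicit is that the first phase cannot invoke \cite{DH} for the original non-monotone $\sigma$ directly (it is merely measurable on $(s_1,s_2)$ and not uniformly monotone); as in the paper one solves the modified problem with a globally uniformly monotone $\sigma^*$ agreeing with $\sigma$ on a neighborhood of the range of $g'$ and then shrinks the time interval so that $u^\flat_x$ remains in that neighborhood --- which is what your appeal to ``the construction of the classical part $u^*$'' amounts to.
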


\section{Proof of main theorems}\label{sec:exist}
In this section, we prove the main results, Theorems \ref{thm:main-NCE} and \ref{thm:main-HEP}, with some essential ingredient, Theorem \ref{thm:density}, to be verified in sections \ref{sec:rank-1} and \ref{sec:density-proof}. The proofs of related corollaries are also included. 

Our exposition hereafter will be parallelwise for \textbf{Cases I} and \textbf{II}.

\subsection{An approach by differential inclusion}
We begin with a motivational approach to attack problem (\ref{ib-P}) for both \textbf{Cases I} and \textbf{II}. To solve  equation (\ref{main-P}) in the sense of distributions in $\Omega_T$, suppose there exists a vector function $w=(u,v)\in W^{1,\infty}(\Omega_T;\R^2)$ such that
\begin{equation}\label{abs-1}
v_x=u_t\quad\mbox{and}\quad v_t=\sigma(u_x)\quad\mbox{a.e. in $\Omega_T$}.
\end{equation}
We remark that this formulation is motivated by the approach in \cite{Zh} and  different from the usual setup of conservation laws (\ref{main-cons}).
For all $\varphi\in C^\infty_c(\Omega_T)$, we now have
\[
\int_{\Omega_T} u_t\varphi_t\,dxdt  =\int_{\Omega_T}v_x\varphi_t\,dxdt= \int_{\Omega_T}v_t\varphi_x\,dxdt= \int_{\Omega_T}\sigma(u_x)\varphi_x\,dxdt;
\]
hence having (\ref{abs-1}) is sufficient to solve (\ref{main-P}) in the sense of distributions in $\Omega_T$. Equivalently, we can rewrite (\ref{abs-1}) as
\[
\nabla w=\begin{pmatrix}
            u_x & u_t \\
            v_x & v_t
            \end{pmatrix}=
            \begin{pmatrix}
            u_x & v_x \\
            v_x & \sigma(u_x)
            \end{pmatrix}\quad\mbox{a.e. in $\Omega_T$},
\]
where $\nabla$ denotes the space-time gradient.
Set
\[
\Sigma_\sigma=\left\{\begin{pmatrix}
            s & b \\
            b & \sigma(s)
            \end{pmatrix}\in\M^{2\times 2}_{sym}\,\Big|\,s,b\in\R \right\}.
\]
We can now recast (\ref{abs-1}) as a  homogeneous  partial differential inclusion with a linear constraint:
\begin{equation*}
\nabla w(x,t)\in\Sigma_\sigma,\quad\mbox{a.e. $(x,t)\in\Omega_T$}.
\end{equation*}
We will solve this inclusion for a suitable subset $K$ of $\Sigma_\sigma$ to incorporate some detailed properties of weak solutions to (\ref{ib-P}).

Homogeneous   differential inclusions of the form $\nabla w\in K\subset\M^{m\times n}$ are first encountered and successfully understood  in the study of crystal microstructure  by {Ball and James} \cite{BJ}, {Chipot and Kinderlehrer} \cite{CK} and with a constraint on a minor of $\nabla w$ by {M\"uller and \v Sver\'ak}  \cite{MSv1}.
General inhomogeneous  differential inclusions are studied by {Dacorogna and Marcellini} \cite{DM1} using Baire's category method and by {M\"uller and  Sychev} \cite{MSy} using the method of convex integration; see also \cite{Ki}.     Moreover, the methods  of  differential inclusions  have been applied to other important problems concerning  elliptic systems \cite{MSv2}, Euler equations \cite{DS}, the porous media equation \cite{CFG}, the active scalar equation \cite{Sy}, Perona-Malik equation and its generalizations \cite{Zh,KY,KY1,KY2}, and ferromagnetism \cite{Ya1}.

\subsection{Proof of Theorems \ref{thm:main-NCE} and \ref{thm:main-HEP}}\label{subsec:mainproof}
Due to the similarity between  Theorems \ref{thm:main-NCE} and \ref{thm:main-HEP}, we can combine their proofs into a single one.

To start the proof, we assume functions $g,h$ and numbers $r_1,r_2$ are given as in Theorem \ref{thm:main-NCE} \textbf{(Case I)}, as in Theorem \ref{thm:main-HEP}  \textbf{(Case II)}.
For clarity, we divide the proof into several steps.

\textbf{(Modified hyperbolic problem):}
Using elementary calculus, from Hypothesis (NC) \textbf{(Case I)}, Hypothesis (HE) \textbf{(Case II)}, we can find a function $\sigma^*\in C^3(-1,\infty)$ \textbf{(Case I)}, $\sigma^*\in C^3(\R)$ \textbf{(Case II)} such that
\begin{equation}\label{modi}
\left\{\begin{array}{l}
         \mbox{$\sigma^*(s)=\sigma(s)$ for all $s\in(-1,s_-(r_1)]\cup[s_+(r_2),\infty)$ \textbf{(Case I)},} \\
         \mbox{\quad\quad\quad\quad\quad\,\,\, for all $s\in(-\infty,s_-(r_1)]\cup[s_+(r_2),\infty)$ \textbf{(Case II)},} \\
         \mbox{$(\sigma^*)'(s)\ge c^*$ for all $s\in(-1,\infty)$, for some constant $c^*>0$ \textbf{(Case I)},} \\
         \mbox{\quad\quad\quad\quad\quad\,\,\, for all $s\in\R$, for some constant $c^*>0$ \textbf{(Case II)},} \\
         \mbox{$\sigma^*(s)<\sigma(s)$ for all $s_-(r_1)<s\le s_-(r_2)$, and}\\
         \mbox{$\sigma^*(s)>\sigma(s)$ for all $s_+(r_1)\le s< s_+(r_2)$ (see Figure \ref{fig3} for both cases).}
       \end{array}
 \right.
\end{equation}

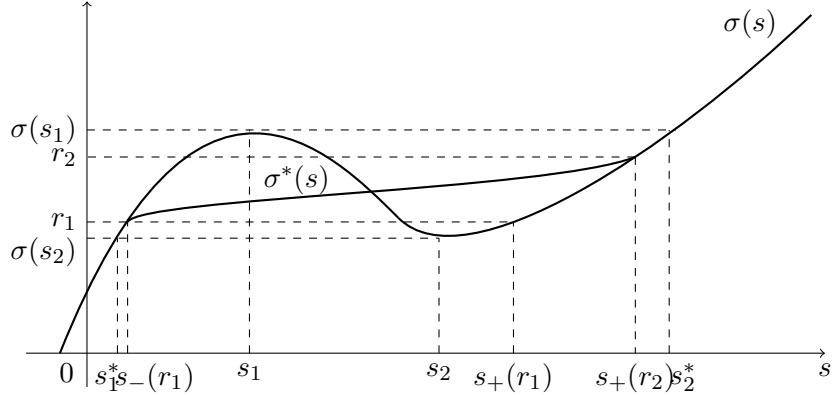
\begin{figure}[ht]
\begin{center}
\begin{tikzpicture}[scale = 0.9]
    \draw[->] (-.5,0) -- (11.3,0);
	\draw[->] (0.4,-.5) -- (0.4,5.2);
 \draw[dashed] (0.4,1.7)--(5.6,1.7);
 \draw[dashed] (0.4,3.3)--(9.1,3.3);
 \draw[dashed] (9,0)--(9,3.3);
    \draw[dashed] (5.6, 0)  --  (5.6, 1.7) ;
   \draw[dashed] (0.85, 0)  --  (0.85, 1.7) ;
\draw[dashed] (1, 0)  --  (1, 1.94) ;
\draw[dashed] (0.4,1.94)  --  (6.7, 1.94) ;
\draw[dashed] (6.7,0)  --  (6.7, 1.94) ;
   \draw[dashed] (2.8, 0)  --  (2.8, 3.3) ;
	\draw[thick]   (0, 0) .. controls (2,5) and  (4, 3)   ..(5,2);
          \draw[thick]   (1, 1.94) .. controls (1.2,2.3) and  (7.2, 2.4)   ..(8.5,2.9);
	\draw[thick]   (5, 2) .. controls  (6, 1) and (9,3) ..(11.1, 5 );
	\draw (11.3,0) node[below] {$s$};
    \draw (0.1,0) node[below] {{$0$}};
    \draw (10.2, 4.5) node[above] {$\sigma(s)$};
 \draw (0.4, 3.3) node[left] {$\sigma(s_1)$};
 \draw (0.4, 1.5) node[left] {$\sigma(s_2)$};
 \draw (0.4, 1.94) node[left] {$r_1$};
 \draw (0.4, 2.9) node[left] {$r_2$};
\draw[dashed] (8.5,2.9)--(0.4,2.9);
\draw[dashed] (8.5,2.9)--(8.5, 0);
   \draw (5.6, 0) node[below] {$s_2$};
   \draw (3.5, 2.2) node[above] {$\sigma^*(s)$};
   \draw (2.8, 0) node[below] {$s_1$};
   \draw (0.7, 0) node[below] {$s_1^*$};
   \draw (1.4, 0) node[below] {$s_-(r_1)$};
   \draw (9.2, 0) node[below] {$s_2^*$};
   \draw (8.5, 0) node[below] {$s_+(r_2)$};
   \draw (6.7, 0) node[below] {$s_+(r_1)$};
    \end{tikzpicture}
\end{center}
\caption{The original $\sigma(s)$ and modified $\sigma^*(s)$}
\label{fig3}
\end{figure}

Thanks to \cite[Theorem 5.2]{DH} \textbf{(Case I)}, \cite[Theorem 5.1]{DH} \textbf{(Case II)}, there exists a finite number $T>0$ such that the \emph{modified} initial-boundary value problem
\begin{equation}\label{ib-P-modi}
\begin{cases} u^*_{tt} =\sigma^*(u^*_x)_x& \mbox{in $\Omega_T$,} \\
u^*(0,t)=u^*(1,t)=0 & \mbox{for $t\in(0,T)$,}\\
u^* =g,\;u^*_t=h & \mbox{on $\Omega\times \{t=0\}$}
\end{cases}
\end{equation}
admits a unique solution $u^*\in \cap_{k=0}^3C^k([0,T];W^{3-k,2}_0(\Omega))$, with $u^*_x>-1$ on $\bar\Omega_T$ for \textbf{Case I}. By the Sobolev embedding theorem, we have $u^*\in C^2(\bar\Omega_T)$.  Let
\[\left\{
\begin{split}
\Omega^1_T & =\{(x,t)\in\Omega_T\,|\,u^*_x(x,t)<s_-(r_1)\},\\
\Omega^2_T & =\{(x,t)\in\Omega_T\,|\,s_-(r_1)<u^*_x(x,t)<s_+(r_2)\},\\
\Omega^3_T & =\{(x,t)\in\Omega_T\,|\,u^*_x(x,t)>s_+(r_2)\},\\
F_T & =\Omega_T\setminus(\cup_{i=1}^3\Omega_T^i);
\end{split}\right.
\]
then  (\ref{sep-domain-NCE}) holds \textbf{(Case I)}, (\ref{sep-domain-HEP}) holds \textbf{(Case II)}, and $\partial\Omega_T^1\cap\partial\Omega_T^3=\emptyset$.  As $s_-(r_1)<g'(x_0)=u^*_x(x_0,0)<s_+(r_2)$, we also have $\Omega_T^2\not=\emptyset$; so $|\Omega_T^2|>0$.

We define
\[
v^*(x,t)=\int_0^x h(z)\,dz+\int_0^t \sigma^*(u^*_x(x,\tau))\,d\tau\quad\forall(x,t)\in\Omega_T.
\]
Then $w^*:=(u^*,v^*)$ satisfies
\begin{equation}\label{classic}
v^*_x=u^*_t\quad\mbox{and}\quad v^*_t=\sigma^*(u^*_x)\quad\mbox{in $\Omega_T$}.
\end{equation}
Note that this   implies $v^*\in C^2(\bar\Omega_T)$; hence $w^*\in C^2(\bar\Omega_T;\R^2)$.

\textbf{(Related matrix sets):}
Define the sets (see Figure \ref{fig3})
\[
\begin{split}
\tilde K_\pm & =\{(s,\sigma(s))\in\R^2\,|\,s_\pm(r_1)\le s\le s_\pm(r_2)\},\\
\tilde K & = \tilde K_+\cup\tilde K_-,\\
\tilde U & =\{(s,r)\in\R^2\,|\, r_1<r<r_2,\,0<\lambda<1,\,s=\lambda s_-(r)+(1-\lambda) s_+(r) \},\\
K & = \left\{\begin{pmatrix} s & b \\ b & r \end{pmatrix}\in\M^{2\times 2}_{sym}\,\Big|\, (s,r)\in\tilde K,\, |b|\le\gamma\right\},\\
U & = \left\{\begin{pmatrix} s & b \\ b & r \end{pmatrix}\in\M^{2\times 2}_{sym}\,\Big|\, (s,r)\in\tilde U,\, |b|<\gamma \right\},
\end{split}
\]
where $\gamma:=\|u^*_t\|_{L^\infty(\Omega_T)}+1$.

\textbf{(Admissible class):}
Let $\epsilon>0$ be given. Choose a number $T_\epsilon\in(0,T]$ so that $\|u^*_t-h\|_{L^\infty(\Omega_{T_\epsilon})}<\epsilon/2=:\epsilon'$.
We then define the \emph{admissible class} $\mathcal A$ by
\[
\mathcal A=\left\{w=(u,v)\in W^{1,\infty}_{w^*}(\Omega_T;\R^2)\,\Bigg|\, \begin{array}{l}
                                                                          w\in C^2(\bar\Omega_T;\R^2),\,w\equiv w^* \\
                                                                          \mbox{in}\;\Omega_T\setminus\bar\Omega_T^w\;\mbox{for some open set}\\
                                                                          \mbox{$\Omega_T^w\subset\subset\Omega_T^2$ with $|\partial\Omega_T^w|=0,$}\\
                                                                          \nabla w(x,t) \in U\;\forall(x,t)\in\Omega_{T}^2,\\
                                                                          \|u_t-h\|_{L^\infty(\Omega_{T_\epsilon})}<\epsilon'
                                                                        \end{array}
 \right\}.
\]
It is easy to see from (\ref{modi}) and (\ref{classic}) that $w^*\in\mathcal A\not =\emptyset$.
For each $\delta>0$, we also define the \emph{$\delta$-approximating class} $\mathcal A_\delta$ by
\[
\mathcal A_\delta=\left\{w\in\mathcal A\,\Big|\, \int_{\Omega_T^2}\dist(\nabla w(x,t),K)\,dxdt\le\delta|\Omega_T^2|
 \right\}.
\]

\textbf{(Density result):}
One crucial step for the proof of Theorem \ref{thm:main-NCE} \textbf{(Case I)}, Theorem \ref{thm:main-HEP} \textbf{(Case II)} is the following density fact whose proof appears in section \ref{sec:density-proof} that is common for both cases.
\begin{thm}\label{thm:density}
For each $\delta>0$,
\[\mbox{$\mathcal A_\delta$ is dense in $\mathcal A$ with respect to the $L^\infty(\Omega_T;\R^2)$-norm.}\]
\end{thm}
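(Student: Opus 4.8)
The plan is to prove density by a convex-integration/``pushing'' argument: starting from an arbitrary $w\in\mathcal A$, I will perform a countable family of localized perturbations that drive $\nabla w$ towards the target set $K$ inside $\Omega_T^2$ while keeping the perturbed map in $\mathcal A$ and moving it an arbitrarily small amount in $L^\infty$. The basic mechanism is a laminate construction supported in small subcubes: wherever $\nabla w(x,t)\in U$ lies far from $K$, the point $(s,r)$ sits strictly inside the open region $\tilde U$ between the two branches $s=s_\pm(r)$, so the $2\times2$ symmetric gradient can be split along a rank-one line into two (or more) matrices that are closer to $K$ — precisely, one slides the $s$-coordinate towards $s_-(r)$ on one piece and towards $s_+(r)$ on the other, at fixed $r$ and with the off-diagonal entry $b$ adjusted so that both new matrices still lie in $U$ with $|b|<\gamma$. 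This is exactly where the general rank-one tool developed in section \ref{sec:rank-1} enters: it produces, for any $A\in U$ at controlled distance from $K$, a Lipschitz perturbation $\phi$ supported in a prescribed open subset, with $\|\phi\|_{L^\infty}$ as small as we like, $A+\nabla\phi\in U$ everywhere, and the distance to $K$ decreased on a definite fraction of the subset (with the remainder, the ``interface'' set, having measure controlled so that it can be handled in a later stage and ultimately accounts for property (d)).

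The key steps, in order, are: (1) Fix $w\in\mathcal A$ and $\eta>0$; it suffices to produce $\tilde w\in\mathcal A_\delta$ with $\|\tilde w-w\|_{L^\infty(\Omega_T;\R^2)}<\eta$. (2) By uniform continuity of $\nabla w$ on $\bar\Omega_T$ and compactness, cover the (open) set where $\dist(\nabla w,K)>\delta/2$ inside $\bar\Omega_T^w$ by finitely many small open cubes on each of which $\nabla w$ is nearly constant and still lies in a compact subset of $U$; on each cube apply the section \ref{sec:rank-1} lemma to replace $w$ by $w$ plus a small compactly-supported perturbation. (3) Iterate this on the successively shrinking ``bad'' sets; because each step reduces $\int_{\Omega_T^2}\dist(\nabla w,K)$ by a fixed geometric factor (outside a small leftover set whose measure we also shrink geometrically), after finitely many — or countably many with a summable $L^\infty$-budget $\sum\eta_k<\eta$ — steps we reach $\mathcal A_\delta$. (4) Check the membership conditions for $\mathcal A$ are preserved: the new $w$ is still $C^2$ and equals $w^*$ off a compactly-contained open subset of $\Omega_T^2$ with null boundary (finite/countable unions of cubes minus the support of perturbations, arranged to have measure-zero boundary); $\nabla w\in U$ on $\Omega_T^2$ throughout; and since all perturbations can be localized in $\Omega_T^2\setminus\bar\Omega_{T_\epsilon}$ or made arbitrarily small in $W^{1,\infty}$, the constraint $\|u_t-h\|_{L^\infty(\Omega_{T_\epsilon})}<\epsilon'$ survives.

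The main obstacle I expect is the bookkeeping that simultaneously (a) keeps the off-diagonal entry strictly controlled by $\gamma$ while sliding $s$ across the full gap between $s_-(r)$ and $s_+(r)$ — one must verify that the rank-one directions available in $\Sigma_\sigma$ genuinely connect the relevant matrices inside the open set $U$, i.e. that $U$ is ``rank-one thick enough'' in the sense required by the section \ref{sec:rank-1} lemma — and (b) controls the leftover ``interface'' set so that its measure tends to zero and $\nabla w$ there can be forced into $\{s_-(r_1),s_+(r_2)\}$, matching property (d) of Theorems \ref{thm:main-NCE}–\ref{thm:main-HEP}. A secondary subtlety is ensuring the open sets $\Omega_T^w$ produced along the iteration can be chosen with $|\partial\Omega_T^w|=0$ and nested/compatible across steps; this is routine but must be stated carefully since $\mathcal A$ builds it into the definition. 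Once the abstract rank-one lemma of section \ref{sec:rank-1} is in hand, the argument is the standard Baire/convex-integration density step adapted to the linear-constraint subspace $\Sigma_\sigma$, so no genuinely new idea beyond careful localization is needed here.
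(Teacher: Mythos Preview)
Your overall strategy is right and matches the paper's, but you are over-engineering it in two places, and one of your worries is a phantom.

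First, no iteration is needed. The class $\mathcal A_\delta$ only asks for $\int_{\Omega_T^2}\dist(\nabla w,K)\le\delta|\Omega_T^2|$, not $\nabla w\in K$ a.e. The paper reaches $\mathcal A_\delta$ in a \emph{single} step: choose a large open $G_1\subset\subset\Omega_T^2\setminus\partial\Omega_T^w$, then a slightly smaller $G_2\subset G_1$ where $(u_x,v_t)$ stays a fixed distance $\nu$ from $\partial\tilde U$, cover most of $G_2$ by finitely many small squares $B_i$ on which $\nabla w$ is nearly constant, and apply Theorem~\ref{thm:rank-1} once per square. The leftover sets $(\Omega_T^2\setminus G_1)$, $G_1\setminus G_2$, $G_2\setminus\bigcup B_i$, and the small ``transition'' parts of each $B_i$ each contribute at most $\tfrac{\delta}{5}|\Omega_T^2|$ (or $\tfrac{2\delta}{5}$ for the squares), so the total is $\le\delta|\Omega_T^2|$.

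Second, your main stated obstacle --- controlling the off-diagonal entry $b$ while sliding $s$ --- is not an obstacle at all, and seeing why is the key simplification. On each square $B_i$ with center value $(s_i,b_i,r_i)$, the paper takes
\[
A_i=\begin{pmatrix} s_i-\alpha_i & b_i\\ b_i & r_i\end{pmatrix},\qquad
B_i=\begin{pmatrix} s_i+\beta_i & b_i\\ b_i & r_i\end{pmatrix},
\]
so $A_i-B_i$ has \emph{only} the $(1,1)$ entry nonzero. This is already rank one, already satisfies the linear constraint $\mathcal L(\xi)=\xi_{12}-\xi_{21}$, and leaves $b$ and $r$ fixed; no adjustment of $b$ is needed, and the segment $[A_i,B_i]$ lies entirely in $U$ by construction. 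Theorem~\ref{thm:rank-1} then gives a perturbation whose gradient is $\tau$-close to this purely $(1,1)$-direction segment, so $|(u_\eta)_t-u_t|<\tau$ everywhere. This is also how the $\epsilon'$-constraint survives: not by localizing away from $\Omega_{T_\epsilon}$ (which may be impossible), but by choosing $\tau$ smaller than the slack $\epsilon'-\|u_t-h\|_{L^\infty(\Omega_{T_\epsilon})}$.

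Finally, property~(d) of Theorems~\ref{thm:main-NCE}--\ref{thm:main-HEP} is not the density theorem's business; it is handled in subsection~\ref{subsec:mainproof} using $w=w^*$ a.e.\ on $F_T=\Omega_T\setminus\bigcup_i\Omega_T^i$, which lies outside $\Omega_T^2$ and is untouched by the perturbations here.
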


\textbf{(Baire's category method):}
Let $\mathcal X$ denote the closure of $\mathcal A$ in the space $L^\infty(\Omega_T;\R^2)$, so that $(\mathcal X,L^\infty)$ is a nonempty complete metric space. As $U$ is bounded in $\M^{2\times 2}$, so is $\mathcal A$   in $W^{1,\infty}(\Omega_T;\R^2)$; thus it is easily checked that
\[
\mathcal X\subset W^{1,\infty}_{w^*}(\Omega_T;\R^2).
\]
Note that the space-time gradient operator $\nabla:\mathcal X\to L^1(\Omega_T;\M^{2\times 2})$ is a Baire-one function (see, e.g., \cite[Proposition 10.17]{Da}). So by the Baire Category Theorem (see, e.g., \cite[Theorem 10.15]{Da}), the set of points of discontinuity of the operator $\nabla$, say $\mathcal D_{\nabla}$, is a set of the first category; thus the set of points at which $\nabla$ is continuous, that is, $\mathcal C_{\nabla}:=\mathcal X\setminus\mathcal D_{\nabla}$, is dense in $\mathcal X$.

\textbf{(Completion of proof):}
Let us confirm that for any function $w=(u,v)\in\mathcal C_\nabla$, its first component $u$ is a weak solution to (\ref{ib-P}) satisfying (a)--(d). Towards this, fix any $w=(u,v)\in\mathcal C_\nabla$.

\textbf{\underline{(\ref{def:sol}) \& (\ref{def:sol-1}):}} To verify (\ref{def:sol}), let $\varphi\in C^\infty_c(\Omega\times[0,T))$. From  Theorem \ref{thm:density} and the density of $\mathcal A$ in $\mathcal X$, we can choose a sequence $w_k=(u_k,v_k)\in\mathcal A_{1/k}$ such that $w_k\to w$ in $\mathcal X$ as $k\to\infty$. As $w\in\mathcal C_{\nabla}$, we have $\nabla w_k\to \nabla w$ in $L^1(\Omega_T;\M^{2\times 2})$ and so pointwise a.e. in $\Omega_T$ after passing to a subsequence if necessary. By (\ref{classic}) and the definition of $\mathcal A$, we have $(v_k)_x=(u_k)_t$ in $\Omega_T$ and $(v_k)_x(x,0)=v^*_x(x,0)=u^*_t(x,0)=h(x)$ ($x\in\Omega$); so
\[
\begin{split}
\int_{\Omega_T}(u_k)_t\varphi_t\,dxdt & =\int_{\Omega_T}(v_k)_x\varphi_t\,dxdt\\
& = -\int_{\Omega_T}(v_k)_{xt}\varphi \,dxdt -\int_0^1 (v_k)_x(x,0)\varphi(x,0)\,dx\\
& = \int_{\Omega_T}(v_k)_{t}\varphi_x \,dxdt -\int_0^1 h(x)\varphi(x,0)\,dx,
\end{split}
\]
that is,
\[
\int_{\Omega_T}((u_k)_t\varphi_t-(v_k)_{t}\varphi_x)\,dxdt = -\int_0^1 h(x)\varphi(x,0)\,dx.
\]
On the other hand, by the Dominated Convergence Theorem, we have
\[
\int_{\Omega_T}((u_k)_t\varphi_t-(v_k)_{t}\varphi_x)\,dxdt \to \int_{\Omega_T}(u_t\varphi_t-v_{t}\varphi_x)\,dxdt;
\]
thus
\begin{equation}\label{pre-weak}
\int_{\Omega_T}(u_t\varphi_t-v_{t}\varphi_x)\,dxdt = -\int_0^1 h(x)\varphi(x,0)\,dx.
\end{equation}
Also, by the Dominated Convergence Theorem,
\[
\int_{\Omega_T^2} \dist(\nabla w_k(x,t),K)\,dxdt\to\int_{\Omega_T^2} \dist(\nabla w(x,t),K)\,dxdt.
\]
From the choice $w_k\in\mathcal A_{1/k}$, we have
\[
\int_{\Omega_T^2} \dist(\nabla w_k(x,t),K)\,dxdt\le\frac{|\Omega_T^2|}{k}\to 0;
\]
so
\[
\int_{\Omega_T^2} \dist(\nabla w(x,t),K)\,dxdt=0.
\]
Since $K$ is closed, we must have
\begin{equation}\label{inclusion}
\nabla w(x,t)\in K\subset \Sigma_\sigma,\quad\mbox{a.e. $(x,t)\in\Omega_T^2$}.
\end{equation}
For each $k$, we have $w_k\equiv w^*$ in $\Omega_T\setminus\bar\Omega_T^{w_k}$ for some open set $\Omega_T^{w_k}\subset\subset\Omega_T^2$ with $|\partial \Omega_T^{w_k}|=0$, and so  $\nabla w_k\equiv\nabla w^*$ in $\Omega_T\setminus\bar\Omega_T^{w_k}$; thus $w=w^*$ and $\nabla w=\nabla w^*$ a.e. in $\Omega_T\setminus\Omega_T^2$.
By (\ref{modi}) and (\ref{classic}), we have
\begin{equation*}
v_x=u_t\quad\mbox{and}\quad v_t=\sigma^*(u^*_x)=\sigma(u_x)\quad\mbox{a.e. in $\Omega_T\setminus\Omega_T^2$}.
\end{equation*}
This together with (\ref{inclusion}) implies that $\nabla w\in\Sigma_\sigma$ a.e. in $\Omega_T$. In particular, $v_t =\sigma(u_x)$ a.e. in $\Omega_T$. Reflecting this to (\ref{pre-weak}), we have (\ref{def:sol}). As $w=w^*$ on $\partial\Omega_T$, we also have (\ref{def:sol-1}).

\textbf{\underline{(a), (b), (c) \& (d):}}
As $w=w^*$ a.e. in $\Omega_T\setminus\Omega_T^2$, it follows from the continuity that $u\equiv u^*$ in $\Omega_T^1\cup\Omega_T^3$; so (b) is guaranteed by the definition of $\Omega_T^1$ and $\Omega_T^3$, with $u^*_x>-1$ on $\bar\Omega_T$ for \textbf{Case I}. Since $\nabla w=\nabla w^*$ a.e. in $\Omega_T\setminus\Omega_T^2$, we have $u_x=u^*_x\in\{s_-(r_1),s_+(r_2)\}$ a.e. in $F_T$; so (d) holds.
From $w_k\in\mathcal A_{1/k}\subset\mathcal A$, we have $|(u_k)_t(x,t)-h(x)|<\epsilon'$ for a.e. $(x,t)\in\Omega_{T_\epsilon}$. Taking the limit as $k\to\infty$, we obtain that $|u_t(x,t)-h(x)|\le\epsilon'$ for a.e. $(x,t)\in\Omega_{T_\epsilon}$; hence $\|u_t-h\|_{L^\infty(\Omega_{T_\epsilon})}\le\epsilon'=\epsilon/2<\epsilon$. Thus (a) is proved. From (\ref{inclusion}) and the definition of $K$, (c) follows.

\textbf{\underline{Infinitely many weak solutions:}}
Having shown that the first component $u$ of each pair $w=(u,v)$ in $\mathcal C_\nabla$ is a weak solution to (\ref{ib-P}) satisfying (a)--(d), it remains to verify that $\mathcal C_\nabla$ has infinitely many elements and that no two different pairs in $\mathcal C_\nabla$ have the first components that are equal.
Suppose on the contrary that $\mathcal C_\nabla$ has finitely many elements. Then $w^*\in\mathcal A\subset\mathcal X=\bar{\mathcal C}_\nabla=\mathcal C_\nabla$, and so $u^*$ itself is  a weak solution to (\ref{ib-P}) satisfying (a)--(d); this is a contradiction. Thus $\mathcal C_\nabla$ has infinitely many elements. Next, we check that for any two $w_1=(u_1,v_1),w_2=(u_2,v_2)\in\mathcal C_\nabla,$
\[
u_1=u_2\quad \Leftrightarrow\quad v_1=v_2.
\]
Suppose $u_1\equiv u_2$ in $\Omega_T$.
As $\nabla w_1,\nabla w_2\in\Sigma_\sigma$ a.e. in $\Omega_T$, we have, in particular, that
\[
(v_1)_x=(u_1)_t=(u_2)_t=(v_2)_x\;\;\mbox{a.e. in $\Omega_T$.}
\]
Since  both $v_1$ and $v_2$ share the same trace $v^*$ on $\partial\Omega_T$, it follows that $v_1\equiv v_2$ in $\Omega_T$. The converse can be shown  similarly. We can now conclude that there are infinitely many weak solutions to (\ref{ib-P}) satisfying (a)--(d).

The proof of Theorem \ref{thm:main-NCE} \textbf{(Case I)}, Theorem \ref{thm:main-HEP} \textbf{(Case II)} is now complete under  the density fact, Theorem \ref{thm:density}, to be justified in sections \ref{sec:rank-1} and \ref{sec:density-proof}.

\subsection{Proofs of Corollaries \ref{coro:weak-NCE}, \ref{coro:weak-NCE-micro}, \ref{coro:weak-HEP} and \ref{coro:weak-HEP-micro}}
We proceed the proofs of the companion versions of Corollaries \ref{coro:weak-NCE} and \ref{coro:weak-HEP} and Corollaries \ref{coro:weak-NCE-micro} and \ref{coro:weak-HEP-micro}, respectively.

\begin{proof}[Proof of Corollaries \ref{coro:weak-NCE} and \ref{coro:weak-HEP}]
Let $(g,h)\in W^{3,2}_0(\Omega)\times W^{2,2}_0(\Omega)$ be any given initial datum, with $g'>-1$ on $\bar\Omega$ for \textbf{Case I}. If $g'(x_0)\in(s_1^*,s_2^*)$ for some $x_0\in\Omega$, then the result follows immediately from Theorem \ref{thm:main-NCE} \textbf{(Case I)}, Theorem \ref{thm:main-HEP} \textbf{(Case II)}.

Next, let us assume $g'(x)\not\in(s_1^*,s_2^*)$ for all $x\in\bar\Omega.$ We may only consider the case that $g'(x)\ge s_2^*$ for all $x\in\bar\Omega$ as the other case can be shown similarly. Fix any two $\sigma(s_2)<r_1<r_2<\sigma(s_1),$ and choose a function $\sigma^*\in C^3(-1,\infty)$ \textbf{(Case I)}, $\sigma^*\in C^3(\R)$ \textbf{(Case II)} in such a way that (\ref{modi}) is fulfilled. By \cite[Theorem 5.2]{DH} \textbf{(Case I)}, \cite[Theorem 5.1]{DH} \textbf{(Case II)}, there exists a finite number $\tilde T>0$ such that the modified initial-boundary value problem (\ref{ib-P-modi}), with $T$ replaced by $\tilde T$, admits a unique solution $u^*\in \cap_{k=0}^3C^k([0,\tilde T];W^{3-k,2}_0(\Omega))$, with $u^*_x>-1$ on $\bar\Omega_{\tilde T}$ for \textbf{Case I}. Now, choose a number $0<T\le\tilde T$ so that $u^*_x\ge s_+(r_2)$ on $\bar\Omega_T$. Then $u^*$ itself is a classical and thus weak solution to problem (\ref{ib-P}).
\end{proof}

\begin{proof}[Proof of Corollaries \ref{coro:weak-NCE-micro} and \ref{coro:weak-HEP-micro}]
Let $(g,h)\in W^{3,2}_0(\Omega)\times W^{2,2}_0(\Omega)$ satisfy $\max_{\bar\Omega}g'\in (s_1^*,s_1)$ or $\min_{\bar\Omega}g'\in (s_2,s_2^*)$. In \textbf{Case I}, assume also $g'>-1$ on $\bar\Omega.$ We may only consider the case that $M:=\max_{\bar\Omega}g'\in (s_1^*,s_1)$ as the other case can be handled in a similar way. Choose two numbers $\sigma(s_2)<r_1<r_2<\sigma(s_1)$ so that $s_-(r_1)>M.$ Then take a $C^3$ function $\sigma^*(s)$ satisfying (\ref{modi}). Using \cite[Theorem 5.2]{DH} \textbf{(Case I)}, \cite[Theorem 5.1]{DH} \textbf{(Case II)}, we can find a finite number $\tilde T>0$ such that  modified  problem (\ref{ib-P-modi}), with $T$ replaced by $\tilde T$, has a unique solution $u^*\in \cap_{k=0}^3C^k([0,\tilde T];W^{3-k,2}_0(\Omega))$, with $u^*_x>-1$ on $\bar\Omega_{\tilde T}$ for \textbf{Case I}. Then choose a number $0<T'\le\tilde T$ so small that $u^*_x\le s_-(r_1)$ on $\bar\Omega_{T'}$ and that $s_1^*<u^*_x(x_0,T')$ for some $x_0\in\Omega$. With the initial datum $(u^*(\cdot,T'),u^*_t(\cdot,T'))\in W^{3,2}_0(\Omega)\times W^{2,2}_0(\Omega)$ at $t=T'$, with $u^*_x(\cdot,T')>-1$ on $\bar\Omega$ for \textbf{Case I}, we can apply Theorem \ref{thm:main-NCE} \textbf{(Case I)}, Theorem \ref{thm:main-HEP} \textbf{(Case II)} to obtain, for some finite number $T>T'$, infinitely many weak solutions $\tilde u\in W^{1,\infty}(\Omega\times(T',T))$ to the initial-boundary value problem
\begin{equation*}
\begin{cases} \tilde u_{tt} =\sigma(\tilde u_x)_x& \mbox{in $\Omega\times (T',T)$,} \\
\tilde u(0,t)=\tilde u(1,t)=0 & \mbox{for $t\in(T',T)$,}\\
\tilde u =u^*,\,\tilde u_t=u^*_t & \mbox{on $\Omega\times \{t=T'\}$}
\end{cases}
\end{equation*}
satisfying the stated properties in the theorem.
Then the glued functions $u=u^*\chi_{\Omega\times(0,T')}+\tilde u\chi_{\Omega\times[T',T)}$ are weak solutions to problem (\ref{ib-P}) fulfilling the required properties.

\end{proof}


\section{Rank-one smooth approximation under linear constraint}\label{sec:rank-1}
In this section, we prepare the main tool, Theorem \ref{thm:rank-1}, for proving the density result, Theorem \ref{thm:density}. Instead of presenting a special case that would be enough for our purpose, we exhibit the following result in a generalized and refined form of \cite[Lemma 2.1]{Po} that may be of independent interest (cf. \cite[Lemma 6.2]{MSv1}).

\begin{thm}\label{thm:rank-1}
Let $m,n\ge 2$ be integers, and let $A,B\in\M^{m\times n}$ be such that $\rank(A-B)=1$; hence
\[
A-B=a\otimes b=(a_i b_j)
\]
for some non-zero vectors $a\in\R^m$ and $b\in\R^n$ with $|b|=1.$
Let $L\in\M^{m\times n}$ satisfy
\begin{equation}\label{rank-1-1}
Lb\ne 0 \;\;\mbox{in}\;\;\R^m,
\end{equation}
and let $\mathcal{L}:\M^{m\times n}\to \R$ be the linear map defined by
\[
\mathcal{L}(\xi)=\sum_{1\le i\le m,\, 1\le j \le n} L_{ij}\xi_{ij}\quad \forall \xi\in\M^{m\times n}.
\]
Assume $\mathcal{L}(A)=\mathcal{L}(B)$ and $0<\lambda<1$ is any fixed number. Then there exists a linear partial differential operator $\Phi:C^1(\R^n;\R^m)\to C^0(\R^n;\R^m)$ satisfying the following properties:

(1) For any open set $\Omega\subset\R^n$,
\[
\Phi v\in C^{k-1}(\Omega;\R^m)\;\;\mbox{whenever}\;\; k\in\N\;\;\mbox{and}\;\;v\in C^{k}(\Omega;\R^m)
\]
and
\[
\mathcal{L}(\nabla\Phi v)=0 \;\;\mbox{in}\;\;\Omega\;\;\forall v\in C^2(\Omega;\R^m).
\]

(2) Let $\Omega\subset\R^n$ be any bounded domain. For each $\tau>0$, there exist a function $g=g_\tau\in  C^{\infty}_{c}(\Omega;\R^m)$ and two disjoint open sets $\Omega_A,\Omega_B\subset\subset\Omega$ such that
\begin{itemize}
\item[(a)] $\Phi g\in C^\infty_c(\Omega;\R^m)$,
\item[(b)] $\dist(\nabla\Phi g,[-\lambda(A-B),(1-\lambda)(A-B)])<\tau$ in $\Omega$,
\item[(c)] $\nabla \Phi g(x)= \left\{\begin{array}{ll}
                                (1-\lambda)(A-B) & \mbox{$\forall x\in\Omega_A$}, \\
                                -\lambda(A-B) & \mbox{$\forall x\in\Omega_B$},
                              \end{array}\right.$
\item[(d)] $||\Omega_A|-\lambda|\Omega||<\tau$, $||\Omega_B|-(1-\lambda)|\Omega||<\tau$,
\item[(e)] $\|\Phi g\|_{L^\infty(\Omega)}<\tau$,
\end{itemize}
where $[-\lambda(A-B),(1-\lambda)(A-B)]$ is the closed line segment in $\mathrm{ker}\mathcal{L}\subset\M^{m\times n}$ joining $-\lambda(A-B)$ and $(1-\lambda)(A-B)$.
\end{thm}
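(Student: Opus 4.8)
The plan is to build the operator $\Phi$ explicitly from a single scalar "potential" and a $1$-periodic sawtooth profile, following the classical laminate construction but adapted to respect the linear constraint $\mathcal{L}$. First I would exploit the hypothesis $\mathcal{L}(A-B)=0$: since $A-B=a\otimes b$, the constraint reads $\sum_{i,j}L_{ij}a_ib_j = (Lb)\cdot a = 0$, i.e. $a\perp Lb$. Because $Lb\neq 0$ in $\R^m$, I can pick a fixed vector $e\in\R^m$ with $e\cdot(Lb)\neq 0$; then for a scalar test function $\phi\in C^1(\R^n)$ I set $\Phi v$ to act on the components of $v$ via directional derivatives in the direction $b$, roughly $\Phi v := a\,(\nabla v_{?}\cdot b)$ — more precisely, the cleanest route is to let $\Phi$ send $v\mapsto a\,\psi(v)$ where $\psi(v) := \beta\cdot\nabla(\ell\cdot v)$ for suitable fixed vectors, arranged so that $\nabla\Phi v$ is always a multiple of $a\otimes b$ (plus controllable lower-order terms that vanish for the specific $g$ we build). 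The key algebraic point is that any matrix of the form $t\,(a\otimes b)$ lies in $\ker\mathcal{L}$, which gives property (1); the regularity claim $\Phi v\in C^{k-1}$ when $v\in C^k$ is then immediate since $\Phi$ is first order.

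Next I would construct the function $g_\tau$ and the sets $\Omega_A,\Omega_B$ for part (2). The standard device is a $1$-periodic Lipschitz sawtooth $h:\R\to\R$ with $h'=1-\lambda$ on a set of density $\lambda$ and $h'=-\lambda$ on a set of density $1-\lambda$ (so that $h$ has mean-zero slope), mollified slightly and then scaled by a small period $\eta$ so that $h_\eta(s):=\eta\,h(s/\eta)$ satisfies $\|h_\eta\|_\infty\to 0$ while $h_\eta'$ still takes the two values $1-\lambda,-\lambda$ on sets of the prescribed densities, with the transition layer having measure $<\tau$. Composing with the linear phase $x\mapsto b\cdot x$ and multiplying by $a$, I obtain a candidate $\tilde g(x)=a\,h_\eta(b\cdot x)$ with $\nabla\tilde g = (h_\eta'(b\cdot x))\,a\otimes b$, which is exactly of the desired laminate form; the sets $\Omega_A:=\{x: h_\eta'(b\cdot x)=1-\lambda\}$ and $\Omega_B:=\{x: h_\eta'(b\cdot x)=-\lambda\}$ then satisfy (c), and a Fubini/equidistribution estimate on the slices $\{b\cdot x=\text{const}\}\cap\Omega$ gives (d). To upgrade $\tilde g$ to a compactly supported smooth function I would cut off with $\chi\in C^\infty_c(\Omega)$ equal to $1$ on $\Omega'\subset\subset\Omega$ with $|\Omega\setminus\Omega'|<\tau$: on $\Omega'$ nothing changes, while on the thin collar the extra gradient term $\tilde g\otimes\nabla\chi$ is $O(\|h_\eta\|_\infty)=O(\eta)$, hence $<\tau$ after shrinking $\eta$; this is where the $L^\infty$-smallness of $h_\eta$ is essential. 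Finally I would need $g_\tau$ itself (not just $\Phi g_\tau$) to be smooth and compactly supported, and $\Phi g_\tau$ likewise — these follow since $g_\tau=\chi\tilde g$ is a product of smooth compactly supported ingredients (after mollifying the sawtooth) and $\Phi$ is a differential operator, so it preserves $C^\infty_c(\Omega)$; property (b) is then just the statement that $\nabla\Phi g_\tau$ lies within $\tau$ of the segment $[-\lambda(A-B),(1-\lambda)(A-B)]$, which holds because its value is $h_\eta'(b\cdot x)(A-B)$ in the bulk and is controlled by the collar estimate elsewhere.

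The step I expect to be the main obstacle is the bookkeeping that makes $\Phi$ genuinely a \emph{linear differential operator independent of $v$} while simultaneously forcing $\nabla\Phi g_\tau$ to equal the pure laminate gradient $h_\eta'(b\cdot x)(A-B)$ for \emph{our particular} $g_\tau$ — one cannot just define $\Phi v = a\,h_\eta'(b\cdot x)$ since that ignores $v$, so $\Phi$ must be something like $v\mapsto a\,(c\cdot\nabla(d\cdot v))$ with fixed vectors $c,d$ chosen so that $\nabla\Phi v\in\ker\mathcal{L}$ always, and then $g_\tau$ must be reverse-engineered so that $c\cdot\nabla(d\cdot g_\tau)$ reproduces the sawtooth profile. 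Getting $c,d$ compatible with both the constraint $a\perp Lb$ and the requirement that $\nabla\Phi v$ be a multiple of $a\otimes b$ is the delicate algebra; I would handle it by writing $\Phi v := \frac{a}{(Lb)\cdot e}\,\big(\partial_b (e\cdot L\text{-adjusted combination of }\partial v)\big)$, but the honest version is to first treat the scalar case $m=1$ (where $\Phi$ is a constant-coefficient first-order operator whose kernel-of-$\mathcal{L}$ property is a one-line check), build the scalar sawtooth there, and then tensor with the fixed vector $a$ to return to general $m$. Once the scalar case is set up correctly, properties (a)--(e) reduce to the classical mollified-laminate estimates plus the collar bound, all of which are routine.
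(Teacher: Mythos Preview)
Your proposal has a genuine gap at precisely the point you flag as the main obstacle, and the fix you sketch does not work. If $\Phi$ has the rank-one form $\Phi v = a\,\psi(v)$ for a scalar first-order operator $\psi$, then $\nabla\Phi v = a\otimes\nabla\psi(v)$ and hence
\[
\mathcal{L}(\nabla\Phi v)=\sum_{i,j}L_{ij}a_i\,\partial_j\psi(v)=(L^{T}a)\cdot\nabla\psi(v).
\]
For this to vanish for \emph{all} $v\in C^2$ you need $L^{T}a=0$, which is not assumed (and is typically false: in the paper's normalized Case~1 it would force $a_j=0$ for $2\le j\le r$). The observation that $t\,(a\otimes b)\in\ker\mathcal{L}$ only kills the component of $\nabla\psi(v)$ along $b$; the transverse components survive. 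Your fallback ``treat $m=1$ and tensor by $a$'' is the same ansatz in disguise and fails for the same reason --- and note that $m=1$ is itself vacuous here, since $a\perp Lb$ with $a,Lb\in\R^1\setminus\{0\}$ is impossible.

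The paper's proof resolves this by abandoning the rank-one ansatz for $\Phi$. After an orthogonal change of variables reducing to $b=e_1$ and $L$ of a special upper form, it writes $\Phi^i v=\sum_{k,l}a^i_{kl}\,v^k_{x_l}$ with \emph{full} coefficient array and solves the algebraic system that makes $\mathcal{L}(\nabla\Phi v)\equiv 0$ as an identity among the second partials $v^k_{x_lx_j}$; the many cross-terms $a^i_{kl}$ with $l\neq 1$ are essential for property~(1). The coefficients are then chosen so that these extra terms vanish when $v$ depends only on $x_1$, and $g$ is taken to be a function of $x_1$ alone (times a cutoff in the remaining variables). A second discrepancy: since $\Phi$ is first order, $\nabla\Phi g$ involves \emph{second} derivatives of $g$, so the laminate pattern must sit in $g''$, not in $g'$; your $\tilde g(x)=a\,h_\eta(b\cdot x)$ with $h_\eta'$ two-valued gives the right $\nabla\tilde g$ but not the right $\nabla\Phi\tilde g$. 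In the paper the scalar profile is a function whose \emph{second} derivative equals $1-\lambda$ or $-\lambda$ on large sets.
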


\begin{proof}
We mainly follow and modify the proof of \cite[Lemma 2.1]{Po} which is divided into three cases.

Set $r=\rank(L).$ By (\ref{rank-1-1}), we have $1\le r\le m\wedge n=:\min\{m,n\}.$

\textbf{(Case 1):} Assume that the matrix $L$ satisfies
\[
\begin{split}
L_{ij}=0\;\;& \mbox{for all $1\le i\le m,\, 1\le j\le n$ but possibly the pairs}\\
& \mbox{$(1,1),(1,2),\cdots,(1,n),(2,2),\cdots,(r,r)$ of $(i,j)$};
\end{split}
\]
hence $L$ is of the form
\begin{equation}\label{rank-1-5}
L=\begin{pmatrix} L_{11} & L_{12} & \cdots & L_{1r} &  \cdots & L_{1n}\\
                    & L_{22} & & & & & \\
                    & & \ddots & & & & \\
                    & & & L_{rr} & & & \\
                    & & & & & & \end{pmatrix}\in\M^{m\times n}
\end{equation}
and that
\[
A-B=a\otimes e_1\;\;\mbox{for some nonzero vector $a=(a_1,\cdots,a_m)\in\R^m$},
\]
where each blank component in (\ref{rank-1-5}) is zero.
From (\ref{rank-1-1}) and $\rank(L)=r$, it follows that the product $L_{11}\cdots L_{rr}\ne 0$. Since $0=\mathcal L(A-B)=\mathcal L(a\otimes e_1)=L_{11}a_1$, we have $a_1=0$.

In this case, the linear map $\mathcal L:\M^{m\times n}\to\R$ is given by
\[
\mathcal L(\xi)=\sum_{j=1}^n L_{1j}\xi_{1j}+\sum_{i=2}^r L_{ii}\xi_{ii},\quad \xi\in\M^{m\times n}.
\]
We will find a linear differential operator $\Phi:C^1(\R^n;\R^m)\to C^0(\R^n;\R^m)$ such that
\begin{equation}\label{rank-1-6}
\mathcal L(\nabla\Phi v)\equiv 0 \quad\forall v\in C^2(\R^n;\R^m).
\end{equation}
So our candidate for such a $\Phi=(\Phi^1,\cdots,\Phi^m)$ is of the form
\begin{equation}\label{rank-1-7}
\Phi^i v=\sum_{1\le k\le m,\,1\le l\le n}a^i_{kl}v^k_{x_l},
\end{equation}
where $1\le i\le m$, $v\in C^1(\R^n;\R^m)$, and $a^i_{kl}$'s are real constants to be determined; then for $v\in C^2 (\R^n;\R^m)$, $1\le i\le m$, and $1\le j\le n$,
\[
\partial_{x_j}\Phi^i v =\sum_{1\le k\le m,\,1\le l\le n}a^i_{kl}v^k_{x_lx_j}.
\]
Rewriting (\ref{rank-1-6}) with this form of $\nabla\Phi v$ for $v\in C^2 (\R^n;\R^m)$, we have
\[
\begin{split}
0 & \equiv  \sum_{1\le k\le m,\,1\le j,l\le n} L_{1j}a^1_{kl}v^k_{x_lx_j} + \sum_{i=2}^r\sum_{1\le k\le m,\,1\le l\le n} L_{ii}a^i_{kl}v^k_{x_lx_i}\\
& = \sum_{k=1}^m \Big(L_{11}a^1_{k1}v^k_{x_1x_1}+\sum_{j=2}^r (L_{1j}a^1_{kj}+L_{jj}a^j_{kj})v^k_{x_jx_j}+\sum_{j=r+1}^n L_{1j}a^1_{kj}v^k_{x_jx_j} \\
& \quad +\sum_{l=2}^r (L_{11}a^1_{kl}+L_{1l}a^1_{k1}+L_{ll}a^l_{k1})v^k_{x_lx_1} +\sum_{l=r+1}^n (L_{11}a^1_{kl}+L_{1l}a^1_{k1})v^k_{x_lx_1} \\
& \quad +\sum_{2\le j<l\le r} (L_{1j}a^1_{kl}+L_{1l}a^1_{kj}+L_{jj}a^j_{kl}+L_{ll}a^l_{kj})v^k_{x_lx_j}\\
& \quad +\sum_{2\le j\le r,\,r+1\le l\le n} (L_{1j}a^1_{kl}+L_{1l}a^1_{kj}+L_{jj}a^j_{kl})v^k_{x_lx_j}
\end{split}
\]
\[
\begin{split}
& \quad+\sum_{r+1\le j<l\le n} (L_{1j}a^1_{kl}+L_{1l}a^1_{kj})v^k_{x_lx_j} \Big).
\end{split}
\]

Should (\ref{rank-1-6}) hold, it  is thus sufficient to solve the following algebraic system for each $k=1,\cdots,m$ (after adjusting the letters for some indices):
\begin{eqnarray}
\label{rr-1}& L_{11}a^1_{k1}=0, &\\
\label{rr-4}& L_{1j}a^1_{kj}+L_{jj}a^j_{kj}=0 & \;\,\forall j=2,\cdots, r,\\
\label{rr-3}& L_{11}a^1_{kj}+L_{1j}a^1_{k1}+L_{jj}a^j_{k1}=0 & \;\,\forall j=2,\cdots, r, \\
\label{rr-5}& L_{1l}a^1_{kj}+L_{1j}a^1_{kl}+L_{ll}a^l_{kj}+L_{jj}a^j_{kl}=0 & \begin{array}{l}
                                                                    \forall j=3,\cdots, r, \\
                                                                    \forall l=2,\cdots, j-1,
                                                                  \end{array} \\
\label{rr-6}& L_{1j}a^1_{kj}=0 & \;\,\forall j=r+1,\cdots, n,\\
\label{rr-7}& L_{11}a^1_{kj}+L_{1j}a^1_{k1}=0 & \;\,\forall j=r+1,\cdots, n, \\
\label{rr-8}& L_{1l}a^1_{kj}+L_{1j}a^1_{kl}+L_{ll}a^l_{kj}=0 & \begin{array}{l}
                                                                    \forall j=r+1,\cdots, n, \\
                                                                    \forall l=2,\cdots, r,
                                                                  \end{array} \\
\label{rr-2}& L_{1l}a^1_{kj}+L_{1j}a^1_{kl}=0 & \begin{array}{l}
                                                                    \forall j=r+2,\cdots, n, \\
                                                                    \forall l=r+1,\cdots,j-1.
                                                                  \end{array}
\end{eqnarray}
Although these systems have infinitely many solutions, we will solve those in a way for a later purpose that the matrix $(a^j_{k1})_{2\le j, k\le m}\in\M^{(m-1)\times(m-1)}$ fulfills
\begin{equation}\label{rank-1-8}
a^j_{21}=a_j \quad\forall j=2,\cdots, m,\quad\mbox{and}\quad a^j_{k1}=0\quad\mbox{otherwise}.
\end{equation}
Firstly, we let the coefficients $a^i_{kl}\;(1\le i,k\le m,\,1\le l\le n)$ that do not appear in systems (\ref{rr-1})--(\ref{rr-2}) $(k=1,\cdots, m)$ be zero with an exception that we set $a^j_{21}=a_j$ for $j=r+1,\cdots,m$ to reflect (\ref{rank-1-8}). Secondly, for $1\le k\le m,\,k\ne 2$, let us take the trivial (i.e., zero) solution of system (\ref{rr-1})--(\ref{rr-2}). Finally, we take $k=2$ and solve system (\ref{rr-1})--(\ref{rr-2})  as follows with (\ref{rank-1-8}) satisfied.
Since $L_{11}\ne 0$, we set $a^1_{21}=0$; then (\ref{rr-1}) is satisfied. So we set
\[
a^j_{21}=-\frac{L_{11}}{L_{jj}}a^1_{2j},\;\;a^1_{2j}=-\frac{L_{jj}}{L_{11}}a_j \quad \forall j=2,\cdots,r;
\]
then (\ref{rr-3}) and (\ref{rank-1-8}) hold. Next, set
\[
a^j_{2j}=-\frac{L_{1j}}{L_{jj}}a^1_{2j}=\frac{L_{1j}}{L_{11}}a_j \quad\forall j=2,\cdots, r;
\]
then (\ref{rr-4}) is fulfilled. Set
\[
a^l_{2j}=-\frac{L_{1l}a^1_{2j}+L_{1j}a^1_{2l}}{L_{ll}}=\frac{L_{1l}L_{jj}a_j+L_{1j}L_{ll}a_l}{L_{ll}L_{11}},\;\;a^j_{2l}=0
\]
for $j=3,\cdots,r$ and $l=2,\cdots, j-1$; then (\ref{rr-5}) holds. Set
\[
a^1_{2j}=0\quad\forall j=r+1,\cdots,n;
\]
then (\ref{rr-6}) and (\ref{rr-7}) are satisfied. Lastly, set
\[
a^1_{2j}=0,\;\; a^l_{2j}=-\frac{L_{1j}}{L_{ll}}a^1_{2l}=\frac{L_{1j}}{L_{11}}a_l\quad\forall j=r+1,\cdots, n,\,\forall l=2,\cdots, r;
\]
then (\ref{rr-8}) and (\ref{rr-2}) hold. In summary, we have determined the coefficients $a^i_{kl}\;(1\le i,k\le m,\,1\le l\le n)$ in such a way that   system (\ref{rr-1})--(\ref{rr-2}) holds for each $k=1,\cdots, m$ and that (\ref{rank-1-8}) is also satisfied. Therefore, (1) follows from (\ref{rank-1-6}) and (\ref{rank-1-7}).

To prove (2), without loss of generality, we can assume $\Omega=(0,1)^n\subset\R^n.$ Let $\tau>0$ be given. Let $u=(u^1,\cdots,u^m)\in C^\infty(\Omega;\R^m)$ be a function to be determined. Suppose $u$ depends only on the first variable $x_1\in(0,1).$ We wish to have
\[
\nabla\Phi u(x)\in\{-\lambda a\otimes e_1,(1-\lambda) a\otimes e_1\}
\]
for all $x\in\Omega$ except in a set of small measure. Since $u(x)=u(x_1)$, it follows from (\ref{rank-1-7}) that for $1\le i\le m$ and $1\le j\le n$,
\[
\Phi^i u=\sum_{k=1}^m a^i_{k1} u^k_{x_1};\;\;\mbox{thus}\;\;\partial_{x_j}\Phi^i u=\sum_{k=1}^m a^i_{k1} u^k_{x_1 x_j}.
\]
As $a^1_{k1}=0$ for $k=1,\cdots, m$, we have $\partial_{x_j}\Phi^1 u =\sum_{k=1}^m a^1_{k1} u^k_{x_1 x_j}=0$ for $j=1,\cdots,n$. We first set $u^1\equiv 0$ in $\Omega$. Then from (\ref{rank-1-8}), it follows that for $i=2,\cdots, m$,
\[
\partial_{x_j}\Phi^i u =\sum_{k=2}^m a^i_{k1} u^k_{x_1 x_j}=a^i_{21}  u^2_{x_1 x_j}=a_i u^2_{x_1 x_j} = \left\{\begin{array}{ll}
                                  a_i u^2_{x_1 x_1} & \mbox{if $j=1$,} \\
                                  0 & \mbox{if $j=2,\cdots, n$.}
                                \end{array} \right.
\]
As $a_1=0$, we thus have that for $x\in\Omega$,
\[
\nabla\Phi u(x)=(u^2)''(x_1) a\otimes e_1.
\]
For irrelevant components of $u$, we simply take $u^3=\cdots =u^m\equiv 0$ in $\Omega$. Lastly, for a number $\delta>0$ to be chosen later, we choose a function $u^2(x_1)\in C^\infty_c(0,1)$ such that there exist two disjoint open sets $I_1,I_2\subset\subset (0,1)$ satisfying $||I_1|-\lambda|<\tau/2$, $||I_2|-(1-\lambda)|<\tau/2$, $\|u^2\|_{L^\infty(0,1)}<\delta$, $\|(u^2)'\|_{L^\infty(0,1)}<\delta$, $-\lambda\le (u^2)''(x_1)\le 1-\lambda$ for $x_1\in(0,1)$, and
\[
(u^2)''(x_1)= \left\{\begin{array}{ll}
                       1-\lambda & \mbox{if $x_1\in I_1$}, \\
                       -\lambda & \mbox{if $x_1\in I_2$}.
                     \end{array}
 \right.
\]
In particular,
\begin{equation}\label{rank-1-9}
\nabla \Phi u(x)\in[-\lambda a\otimes e_1,(1-\lambda)a\otimes e_1]\;\;\forall x\in\Omega.
\end{equation}
We now choose an open set $\Omega'_\tau\subset\subset\Omega':=(0,1)^{n-1}$ with $|\Omega'\setminus\Omega'_\tau|<\tau/2$ and a function $\eta\in C^\infty_c(\Omega')$ so that
\[
0\le\eta\le 1\;\;\mbox{in}\;\;\Omega',\;\; \eta\equiv 1\;\;\mbox{in}\;\Omega'_\tau,\;\;\mbox{and}\;\;|\nabla^i_{x'}\eta|<\frac{C}{\tau^i}\;\;(i=1,2)\;\;\mbox{in}\;\Omega',
\]
where $x'=(x_2,\cdots,x_n)\in\Omega'$ and the constant $C>0$ is independent of $\tau$.
Now, we define $g(x)=\eta(x') u(x_1)\in C^\infty_c(\Omega;\R^m)$. Set $\Omega_A=I_1\times\Omega'_\tau$ and $\Omega_B=I_2\times\Omega'_\tau.$ Clearly, (a) follows from (1). As $g(x)=u(x_1)=u(x)$ for $x\in \Omega_A\cup\Omega_B$, we have
\[
\nabla\Phi g(x)=\left\{\begin{array}{ll}
                         (1-\lambda)a\otimes e_1 & \mbox{if $x\in \Omega_A$}, \\
                       -\lambda a\otimes e_1 & \mbox{if $x\in \Omega_B$};
                       \end{array}
 \right.
\]
hence (c) holds. Also,
\[
||\Omega_A|-\lambda|\Omega||=||\Omega_A|-\lambda|=||I_1||\Omega'_\tau|-\lambda|=||I_1|-|I_1||\Omega'\setminus\Omega'_\tau|-\lambda|<\tau,
\]
and likewise
\[
||\Omega_B|-(1-\lambda)|\Omega||<\tau;
\]
so (d) is satisfied.
Note that for $i=1,\cdots,m,$
\[
\begin{split}
\Phi^i g & = \Phi^i(\eta u) = \sum_{1\le k\le m,\,1\le l\le n}a^i_{kl}(\eta u^k)_{x_l}=\eta\Phi^i  u+\sum_{1\le k\le m,\,1\le l\le n}a^i_{kl}\eta_{x_l} u^k\\
& = \eta\Phi^i  u+ u^2\sum_{l=1}^n a^i_{2l}\eta_{x_l} =\eta a^i_{21}u^2_{x_1} + u^2\sum_{l=1}^n a^i_{2l}\eta_{x_l}.
\end{split}
\]
So
\[
\|\Phi g\|_{L^\infty(\Omega)}\le C\max\{\delta,\delta\tau^{-1}\}<\tau
\]
if $\delta>0$ is chosen small enough; so (e) holds.
Next, for $i=1,\cdots,m$ and $j=1,\cdots,n,$
\[
\partial_{x_j}\Phi^i g=\eta_{x_j}a^i_{21}u^2_{x_1}+\eta\partial_{x_j}\Phi^i u + u^2_{x_j}\sum_{l=1}^n a^i_{2l}\eta_{x_l} + u^2\sum_{l=1}^n a^i_{2l}\eta_{x_l x_j};
\]
hence from (\ref{rank-1-9}),
\[
\dist(\nabla\Phi g,[-\lambda a\otimes e_1,(1-\lambda) a\otimes e_1])\le C\max\{\delta \tau^{-1},\delta\tau^{-2}\}<\tau\;\;\mbox{in $\Omega$}
\]
if $\delta$ is sufficiently small. Thus (b) is fulfilled.

\textbf{(Case 2):} Assume that $L_{i1}=0$ for all $i=2,\cdots, m$, that is,
\begin{equation}\label{rank-1-3}
L=\begin{pmatrix} L_{11} & L_{12} & \cdots & L_{1n}\\
                  0  & L_{22} & \cdots & L_{2n}\\
                  \vdots  & \vdots & \ddots & \vdots\\
                  0  & L_{m2} & \cdots & L_{mn} \end{pmatrix}\in\M^{m\times n}
\end{equation}
and that
\[
A-B=a\otimes e_1\;\;\mbox{for some nonzero vector $a\in\R^m$};
\]
then by (\ref{rank-1-1}), we have $L_{11}\ne 0.$

Set
\[
\hat L=\begin{pmatrix} L_{22} & \cdots & L_{2n}\\
                  \vdots & \ddots & \vdots\\
                  L_{m2} & \cdots & L_{mn} \end{pmatrix}\in\M^{(m-1)\times (n-1)}.
\]
As $L_{11}\ne 0$ and $\rank(L)=r$, we must have $\rank(\hat L)=r-1.$ Using the singular value decomposition theorem, there exist two matrices $\hat U\in O(m-1)$ and  $\hat V\in O(n-1)$ such that
\begin{equation}\label{rank-1-4}
\hat U^T\hat L\hat V=\mathrm{diag}(\sigma_2,\cdots,\sigma_r,0,\cdots,0)\in\M^{(m-1)\times (n-1)},
\end{equation}
where $\sigma_2,\cdots,\sigma_r$ are the positive singular values of $\hat L.$ Define
\begin{equation}\label{rank-1-2}
U=\begin{pmatrix} 1 & 0\\
                  0 & \hat U\end{pmatrix}\in O(m),\;\;
V=\begin{pmatrix} 1 & 0\\
                  0 & \hat V\end{pmatrix}\in O(n).
\end{equation}
Let $L'=U^T LV$, $A'=U^T AV$, and $B'=U^T BV$. Let $\mathcal{L}':\M^{m\times n}\to \R$ be the linear map given by
\[
\mathcal{L}'(\xi')=\sum_{1\le i\le m,\,1\le j\le n}L'_{ij}\xi'_{ij}\quad \forall \xi'\in\M^{m\times n}.
\]
Then, from (\ref{rank-1-3}), (\ref{rank-1-4}) and (\ref{rank-1-2}), it is straightforward to check the following:
\[
\left\{
\begin{array}{l}
  \mbox{$A'-B'=a'\otimes e_1$ for some nonzero vector $a'\in\R^m$,} \\
  \mbox{$L' e_1\neq 0$, $\mathcal L'(A)=\mathcal L'(B)$, and} \\
  \mbox{$L'$ is of the form (\ref{rank-1-5}) in Case 1 with $\mathrm{rank}(L')=r$.}
\end{array}\right.
\]
Thus we can apply the result of Case 1 to find a linear operator $\Phi':C^1(\R^n;\R^m)\to C^0(\R^n;\R^m)$ satisfying the following:

(1') For any open set $\Omega'\subset\R^n$,
\[
\Phi' v'\in C^{k-1}(\Omega';\R^m)\;\;\mbox{whenever}\;\; k\in\N\;\;\mbox{and}\;\;v'\in C^{k}(\Omega';\R^m)
\]
and
\[
\mathcal{L'}(\nabla\Phi' v')=0 \;\;\mbox{in}\;\;\Omega'\;\;\mbox{for all}\;\;v'\in C^2(\Omega';\R^m).
\]

(2') Let $\Omega'\subset\R^n$ be any bounded domain. For each $\tau>0$, there exist a function $g'=g'_\tau\in  C^{\infty}_{c}(\Omega';\R^m)$ and two disjoint open sets $\Omega'_{A'},\Omega'_{B'}\subset\subset\Omega'$ such that
\begin{itemize}
\item[(a')] $\Phi' g'\in C^\infty_c(\Omega';\R^m)$,
\item[(b')] $\dist(\nabla\Phi' g',[-\lambda(A'-B'),(1-\lambda)(A'-B')])<\tau$ in $\Omega'$,
\item[(c')] $\nabla \Phi' g'(x)= \left\{\begin{array}{ll}
                                (1-\lambda)(A'-B') & \mbox{$\forall x\in\Omega'_{A'}$}, \\
                                -\lambda(A'-B') & \mbox{$\forall x\in\Omega'_{B'}$},
                              \end{array}\right.$
\item[(d')] $||\Omega'_{A'}|-\lambda|\Omega'||<\tau$, $||\Omega'_{B'}|-(1-\lambda)|\Omega'||<\tau$,
\item[(e')] $\|\Phi' g'\|_{L^\infty(\Omega')}<\tau$.
\end{itemize}

For $v\in C^1(\R^n;\R^m)$, let $v'\in C^1(\R^n;\R^m)$ be defined by $v'(y)=U^T v(Vy)$ for $y\in\R^n$. We define $\Phi v(x)=U\Phi' v'(V^T x)$ for $x\in\R^n$, so that $\Phi v\in C^0(\R^n;\R^m).$ Then it is straightforward to check  that properties (1') and (2') of  $\Phi'$ imply respective properties (1) and (2) of  the linear operator $\Phi:C^1(\R^n;\R^m)\to C^0(\R^n;\R^m)$.

\textbf{(Case 3):} Finally, we consider the general case that $A$, $B$ and $L$ are as in the statement of the theorem. As $|b|=1$, there exists an $R\in O(n)$ such that $R^T b=e_1\in\R^n$. Also there exists a symmetric (Householder) matrix $P\in O(m)$ such that the matrix $L':=PLR$ has the first column parallel to $e_1\in\R^m$. Let
\[
A'=PAR\;\;\mbox{and}\;\; B'=PBR.
\]
Then $A'-B'=a'\otimes e_1$, where $a'=Pa\ne 0$. Note also that $L'e_1=PLRR^tb=PLb\ne 0$. Define $\mathcal L'(\xi')=\sum_{i,j}L'_{ij}\xi'_{ij}\;(\xi'\in\M^{m\times n})$; then $\mathcal L'(A')=\mathcal L(A)=\mathcal L(B)=\mathcal L'(B')$. Thus by the result of  Case 2, there exists a linear operator $\Phi':C^1(\R^n;\R^m)\to C^0(\R^n;\R^m)$ satisfying (1') and (2') above.

For $v\in C^1(\R^n;\R^m)$, let $v'\in C^1(\R^n;\R^m)$ be defined by $v'(y)=Pv(Ry)$ for $y\in\R^n$, and define $\Phi v(x)=P\Phi'v'(R^Tx)\in C^0(\R^n;\R^m)$. Then it is easy to check that the linear operator $\Phi:C^1(\R^n;\R^m)\to C^0(\R^n;\R^m)$ satisfies (1) and (2) by (1') and (2') similarly as  in  Case 2.

\end{proof}

\section{Proof of density result}\label{sec:density-proof}

In this final section, we prove Theorem \ref{thm:density}, which plays a pivotal role in the proof of the main results, Theorems \ref{thm:main-NCE} and \ref{thm:main-HEP}.

To start the proof, fix a $\delta>0$ and choose any $w=(u,v)\in\mathcal A$ so that $w\in W^{1,\infty}_{w^*}(\Omega_T;\R^2)\cap C^2(\bar\Omega_T;\R^2)$ satisfies the following:
\begin{equation}\label{func-w}
\left\{\begin{array}{l}
         \mbox{$w\equiv w^*$ in $\Omega_T\setminus\bar\Omega_T^w$ for some open set $\Omega_T^w\subset\subset\Omega_T^2$ with $|\partial\Omega_T^w|=0$,} \\
         \mbox{$\nabla w(x,t)\in U$ for all $(x,t)\in\Omega_T^2$, and $\|u_t-h\|_{L^\infty(\Omega_{T_\epsilon})}<\epsilon'$.}
       \end{array}
 \right.
\end{equation}
Let $\eta>0$. Our goal is to construct a function $w_\eta=(u_\eta,v_\eta)\in \mathcal A_\delta$ with $\|w-w_\eta\|_{L^\infty(\Omega_T)}<\eta;$ that is, a function $w_\eta\in W^{1,\infty}_{w^*}(\Omega_T;\R^2)\cap C^2(\bar\Omega_T;\R^2)$ with the following properties:
\begin{equation}\label{func-weta}
\left\{\begin{array}{l}
         \mbox{$w_\eta\equiv w^*$ in $\Omega_T\setminus\bar\Omega_T^{w_\eta}$ for some open set $\Omega_T^{w_\eta}\subset\subset\Omega_T^2$ with $|\partial\Omega_T^{w_\eta}|=0$,} \\
         \mbox{$\nabla w_\eta(x,t)\in U$ for all $(x,t)\in\Omega_T^2$,  $\|(u_\eta)_t-h\|_{L^\infty(\Omega_{T_\epsilon})}<\epsilon'$,} \\
         \mbox{$\int_{\Omega_T^2}\dist(\nabla w_\eta(x,t),K)\,dxdt\le\delta|\Omega_T^2|$, and $\|w-w_\eta\|_{L^\infty(\Omega_T)}<\eta$.}
       \end{array}
 \right.
\end{equation}

For clarity, we divide the proof into several steps.

\textbf{(Step 1):} Choose a nonempty open set $G_1\subset\subset\Omega_T^2\setminus\partial\Omega_T^w$ with $|\partial G_1|=0$ so that
\begin{equation}\label{int-1}
\int_{(\Omega_T^2\setminus\partial\Omega_T^w)\setminus G_1}\dist(\nabla w(x,t),K)\,dxdt\le\frac{\delta}{5}|\Omega_T^2|.
\end{equation}
Since $\nabla w\in U$ on $\bar G_1$, we have $\|u_t\|_{L^\infty(G_1)}<\gamma$;
then fix a number $\theta$ with
\begin{equation}\label{theta}
0<\theta<\min\{\epsilon'-\|u_t-h\|_{L^\infty(\Omega_{T_\epsilon})},\gamma-\|u_t\|_{L^\infty(G_1)}\}.
\end{equation}
For each $\mu>0$, let
\[
\begin{split}
G_2^\mu & =\{(x,t)\in G_1\,|\,\dist((u_x(x,t),v_t(x,t)),\partial\tilde U)>\mu\},\\
H_2^\mu & =\{(x,t)\in G_1\,|\,\dist((u_x(x,t),v_t(x,t)),\partial\tilde U)<\mu\},\\
F_2^\mu & =\{(x,t)\in G_1\,|\,\dist((u_x(x,t),v_t(x,t)),\partial\tilde U)=\mu\}.
\end{split}
\]
Since $\lim_{\mu\to 0^+}|H_2^\mu|=0,$ we can find a $\nu\in(0,\min\{\frac{\delta}{5},\theta\})$ such that
\begin{equation}\label{int-2}
\int_{H_2^{\nu}}\dist(\nabla w(x,t),K)\,dxdt\le\frac{\delta}{5}|\Omega_T^2|,\;\;G^\nu_2\not=\emptyset,\;\;\mbox{and}\;\; |F_2^{\nu}|=0.
\end{equation}
Let us write $G_2=G_2^{\nu}$ and $H_2=H_2^{\nu}$. Choose finitely many disjoint open squares $B_1,\cdots, B_N\subset G_2$ parallel to the axes such that
\begin{equation}\label{int-3}
\int_{G_2\setminus(\cup_{i=1}^N B_i)}\dist(\nabla w(x,t),K)\,dxdt\le\frac{\delta}{5}|\Omega_T^2|.
\end{equation}

\textbf{(Step 2):} Dividing the squares $B_1,\cdots, B_N$ into smaller disjoint sub-squares if necessary, we can assume that
\begin{equation}\label{small}
|\nabla w(x,t)-\nabla w(\tilde x,\tilde t)|<\frac{\nu}{8}
\end{equation}
whenever $(x,t),(\tilde x,\tilde t)\in B_i$ and $i=1,\cdots,N$. Now, fix any $i\in\{1,\cdots,N\}$. Let $(x_i,t_i)$ denote the center of the square $B_i$, and write
\[
(s_i,r_i)=(u_x(x_i,t_i),v_t(x_i,t_i))\in \tilde U;
\]
then $\dist((s_i,r_i),\partial\tilde U)>\nu$. Let $\alpha_i>0,\,\beta_i>0$ be  chosen so that
\[
(s_i-\alpha_i,r_i),(s_i+\beta_i,r_i)\in\tilde U,\;\;\dist((s_i-\alpha_i,r_i),\tilde K_-)=\frac{\nu}{2},\;\;\mbox{and}
\]
\[
\dist((s_i+\beta_i,r_i),\tilde K_+)=\frac{\nu}{2}.
\]
To apply Theorem \ref{thm:rank-1} with $m=n=2$ to the square $B_i$, let
\[
A_i=\begin{pmatrix} s_i-\alpha_i & b_i \\ b_i & r_i \end{pmatrix}\;\;\mbox{and}\;\;B_i=\begin{pmatrix} s_i+\beta_i & b_i \\ b_i & r_i\end{pmatrix},
\]
where $b_i=u_t(x_i,t_i);$ then
\[
A_i-B_i=\begin{pmatrix} -\alpha_i-\beta_i & 0 \\ 0 & 0 \end{pmatrix}=\begin{pmatrix} -\alpha_i-\beta_i  \\  0 \end{pmatrix}\otimes \begin{pmatrix} 1 \\  0 \end{pmatrix}.
\]
Let $\mathcal L:\M^{2\times 2}\to\R$ be the linear map defined by
\[
\mathcal L(\xi)=-\xi_{21}+\xi_{12}\quad\forall \xi\in\M^{2\times 2},
\]
with its associated matrix $L=\begin{pmatrix} 0 & -1 \\ 1 & 0 \end{pmatrix}$; then
\[
\mathcal L(A_i)=\mathcal L(B_i)(=0)\;\;\mbox{and}\;\;C_i=\lambda_i A_i+(1-\lambda_i) B_i,
\]
where $C_i=\nabla w(x_i,t_i)$ and $\lambda_i=\frac{\beta_i}{\alpha_i+\beta_i}\in(0,1).$ By Theorem \ref{thm:rank-1}, there exists a linear operator $\Phi_i:C^1(\R^2;\R^2)\to C^0(\R^2;\R^2)$ satisfying properties (1) and (2) in the statement of the theorem with $A=A_i$, $B=B_i$ and $\lambda=\lambda_i$. In particular, for the square $B_i\subset\R^2$ and a number $0<\tau<\min\{\frac{\nu}{8},\theta,\eta,\frac{\delta|\Omega_T^2|}{5SN}\}$ with $\begin{displaystyle} S:=\max_{r_1\le r\le r_2} (s_+(r)-s_-(r))>0 \end{displaystyle}$, we can find a function $g_i\in C^\infty_c(B_i;\R^2)$ such that
\begin{equation}\label{patch}
\left\{\begin{array}{l}
         \Phi_i g_i \in C^\infty_c(B_i;\R^2),\;\mathcal L(\nabla\Phi_i g_i)=0\;\;\mbox{in $B_i$,} \\
         \dist(\nabla\Phi_i g_i,[-\lambda_i(A_i-B_i),(1-\lambda_i)(A_i-B_i)])<\tau\;\;\mbox{in $B_i$,} \\
         |B_i^1|<\tau, \;\|\Phi_i g_i\|_{L^\infty(B_i)}<\tau ,
       \end{array}
 \right.
\end{equation}
where
\[
B_i^1=\{(x,t)\in B_i\,|\,\dist(\nabla\Phi_i g_i(x,t),\{-\lambda_i(A_i-B_i),(1-\lambda_i)(A_i-B_i)\})>0\}.
\]
Let $B_i^2=B_i\setminus B_i^1$.
We finally define
\[
w_\eta=w+\sum_{i=1}^N \chi_{B_i} \Phi_i g_i\;\;\mbox{in $\Omega_T$.}
\]

\textbf{(Step 3):} Let us check that $w_\eta=(u_\eta, v_\eta)$ is indeed a desired function satisfying (\ref{func-weta}). It is clear from (\ref{func-w}) and the construction above that $w_\eta\in W^{1,\infty}_{w^*}(\Omega_T;\R^2)\cap C^2(\bar\Omega_T;\R^2)$. Set $\Omega_T^{w_\eta}=\Omega_T^{w}\cup(\cup_{i=1}^N B_i)$; then $\Omega_T^{w_\eta}\subset\subset \Omega_T^2$, $|\partial\Omega_T^{w_\eta}|=0$, and $w_\eta=w=w^*$ in $\Omega_T\setminus\bar\Omega_T^{w_\eta}$. From (\ref{small}), (\ref{patch}), $\nu<\theta$ and $\tau<\nu/8$, it follows that for $i=1,\cdots, N$,
\[
\nabla w_\eta=\nabla w+\nabla\Phi_i g_i\in [A_i,B_i]_{\nu/4}\subset U\;\;\mbox{in $B_i$,}
\]
where $[A_i,B_i]_{\nu/4}$ is the $\frac{\nu}{4}$-neighborhood of the closed line segment $[A_i,B_i]$ in the space $\M^{2\times 2}_{sym}$; thus $\nabla w_\eta\in U$ in $\Omega_T^2$. By (\ref{theta})   and (\ref{patch}) with zero antidiagonal of $A_i-B_i$, we have
\[
\|(u_\eta)_t-h\|_{L^\infty(\Omega_{T_\epsilon})}\le
\|u_t-h\|_{L^\infty(\Omega_{T_\epsilon})}+\tau<\|u_t-h\|_{L^\infty(\Omega_{T_\epsilon})}+\theta<\epsilon',
\]
\[
\|w-w_\eta\|_{L^\infty(\Omega_{T})}=\|\sum_{i=1}^N \chi_{B_i} \Phi_i g_i\|_{L^\infty(\Omega_{T})}<\tau<\eta.
\]
Lastly, note that
\[
\begin{split}
\int_{\Omega_T^2} & \dist(\nabla w_\eta(x,t),K)\,dxdt = \int_{(\Omega_T^2\setminus\partial\Omega_T^w)\setminus G_1}\dist(\nabla w (x,t),K)\,dxdt\\
& + \int_{H_2}  \dist(\nabla w (x,t),K)\,dxdt + \int_{G_2\setminus(\cup_{i=1}^N B_i)}  \dist(\nabla w (x,t),K)\,dxdt\\
& +\sum_{i=1}^N \int_{B_i}  \dist(\nabla w (x,t)+\nabla \Phi_i g_i(x,t),K)\,dxdt=:I_1+I_2+I_3+I_4.
\end{split}
\]
Observe here that for $i=1,\cdots,N$,
\[
\begin{split}
\int_{B_i}  \dist( & \nabla w +\nabla \Phi_i g_i,K)\,dxdt  = \int_{B_i^1}  \dist(\nabla w +\nabla \Phi_i g_i,K)\,dxdt \\
& + \int_{B_i^2}  \dist(\nabla w (x,t)+\nabla \Phi_i g_i,K)\,dxdt \\
& \le S|B_i^1| + \nu|B_i^2| \le S\tau + \frac{\delta}{5}|B_i^2|\le \frac{\delta|\Omega_T^2|}{5N} + \frac{\delta}{5}|B_i^2|.
\end{split}
\]
Thus $I_4\le \frac{2\delta}{5}|\Omega_T^2|$; whence with (\ref{int-1}), (\ref{int-2}) and (\ref{int-3}), we have $I_1+I_2+I_3+I_4\le (\frac{\delta}{5}+\frac{\delta}{5}+\frac{\delta}{5}+\frac{2\delta}{5})|\Omega_T^2|=\delta|\Omega_T^2|$.

Therefore, (\ref{func-weta}) is proved, and the proof is complete.




\end{document}